\documentclass[11pt]{article}

\title{\vspace{-50pt}Generating the Johnson filtration II: finite generation}
\author{Thomas Church\ and Andrew Putman\thanks{TC is supported in part by NSF grant DMS-1350138, the Alfred P.\ Sloan Foundation, the Institute for Advanced Study, and the Friends of the Institute.  AP is supported in part by NSF grant DMS-1737434.}\vspace{-6pt}}
\date{\today}

\usepackage{amsmath,amssymb,amsthm,amscd,amsfonts}
\usepackage{epsfig,pinlabel}
\usepackage[hmargin=1.25in, vmargin=1in]{geometry}
\usepackage[font=small,format=plain,labelfont=bf,up,textfont=it,up]{caption}
\usepackage{type1cm}
\usepackage{calc}
\usepackage{enumitem}
\usepackage{bm}
\usepackage{url}
\usepackage[all,cmtip]{xy}

\usepackage[
colorlinks=true, linkcolor=black, citecolor=black, urlcolor=black]{hyperref}

\newlist{compactitem}{itemize}{3}
\setlist[compactitem]{nosep}
\setlist[compactitem,1]{label=\textbullet}
\setlist[compactitem,2]{label=--}
\setlist[compactitem,3]{label=\ensuremath{\ast}}

\newlist{compactdesc}{description}{3}
\setlist[compactdesc]{nosep}

\newlist{compactenum}{enumerate}{3}
\setlist[compactenum]{nosep}
\setlist[compactenum,1]{label=\arabic*.}
\setlist[compactenum,2]{label=(\alph*)}
\setlist[compactenum,3]{label=\roman*.}

\usepackage{bookmark}
\bookmarksetup{depth=2}

\setlength{\parindent}{0in}
\setlength{\parskip}{1.8ex}

\theoremstyle{plain}
\newtheorem{theorem}{Theorem}[section]
\newtheorem{maintheorem}{Theorem}

\newtheorem{proposition}[theorem]{Proposition}

\newtheorem{claim}{Claim}

\newenvironment{theorem-prime}[1]{\innerthm}{\endinnerthm}

\theoremstyle{definition}

\theoremstyle{remark}
\newtheorem{remark}[theorem]{Remark}

\DeclareMathOperator{\Hom}{Hom}

\DeclareMathOperator{\Mod}{Mod}

\newcommand\Torelli{\ensuremath{{\mathcal I}}}
\newcommand\JohnsonKer{\ensuremath{{\mathcal K}}}
\newcommand\Johnson{\ensuremath{J}}
\DeclareMathOperator{\IA}{IA}
\DeclareMathOperator{\JIA}{JIA}
\DeclareMathOperator{\IO}{IO}
\DeclareMathOperator{\Sp}{Sp}
\DeclareMathOperator{\GL}{GL}
\DeclareMathOperator{\SL}{SL}

\newcommand\R{\ensuremath{\mathbb{R}}}

\newcommand\Z{\ensuremath{\mathbb{Z}}}
\newcommand\Q{\ensuremath{\mathbb{Q}}}
\newcommand\N{\ensuremath{\mathbb{N}}}

\renewcommand\P{\ensuremath{\mathbb{P}}}

\DeclareMathOperator{\HH}{H}


\DeclareMathOperator{\Aut}{Aut}
\DeclareMathOperator{\Out}{Out}



\newcommand{\para}[1]{\medskip\noindent\textbf{#1.}}
\newcommand{\into}{\hookrightarrow}

\newcommand{\bwedge}{\textstyle{\bigwedge}}
\newcommand{\abs}[1]{\left\lvert#1\right\rvert}
\newcommand{\normal}{\lhd}

\newcommand{\algp}{\mathcal{G}}
\newcommand{\iso}{\cong}
\DeclareMathOperator{\FInc}{{\tt FInc}}
\DeclareMathOperator{\BFInc}{{\bf FInc}}
\newcommand{\subgp}{K}
\newcommand{\FIncgroup}{$\FInc$-group}
\newcommand{\FIncgroups}{$\FInc$-groups}

\newcommand{\dual}{\vee}

\newcommand{\arXiv}[1]{\href{http://arxiv.org/abs/#1}{arXiv:#1}}
\newcommand{\myemail}[1]{\href{mailto:#1}{\nolinkurl{#1}}}

\begin{document}

\maketitle

\vspace{-14pt}
\begin{abstract}
We prove that every term of the lower central series and Johnson filtrations of the Torelli subgroups of the mapping
class group and the automorphism group of a free group are finitely generated in a stable range.  This was originally
proved for the commutator subgroup by Ershov--He.  \end{abstract}

\para{Historical note}
After we distributed a preliminary version of this paper, we learned that Ershov had independently proved
similar results.  The three of us decided to write a joint paper, and in the course of writing this joint paper
we managed to significantly improve the ranges of our theorems.  The combined paper is \cite{ChurchErshovPutman}.
The paper you are reading will be left as a permanent preprint and will not be submitted for publication.

\section{Introduction}

Let $\Sigma_g^b$ be a compact oriented genus $g$ surface with $b=0$ or $b=1$ boundary components. 
The {\em mapping class group}
of $\Sigma_g^b$, denoted $\Mod_g^b$, 
is the group of isotopy classes of orientation-preserving diffeomorphisms of $\Sigma_g^b$ that
fix $\partial \Sigma_g^b$ pointwise.  The group $\Mod_g^b$ acts on $\HH_1(\Sigma_g^b;\Z)$ and preserves
the algebraic intersection form.  Since $b \leq 1$, the algebraic intersection form is a nondegenerate symplectic
form, and thus this action induces a homomorphism $\Mod_g^b \rightarrow \Sp_{2g}(\Z)$ which is classically known
to be surjective.  The {\em Torelli group}, denoted $\Torelli_g^b$, is its kernel.  In summary,
we have a short exact sequence
\[1 \longrightarrow \Torelli_g^b \longrightarrow \Mod_g^b \longrightarrow \Sp_{2g}(\Z) \longrightarrow 1.\]
See~\cite{FarbMargalitPrimer} for a survey of the mapping class group and Torelli group.

\para{Lower central series}
Recall that if $G$ is a group, then the {\em lower central series} of $G$ is the sequence
\[G = G[1] \supset G[2] \supset G[3] \supset \cdots\]
of subgroups of $G$ defined via the inductive formula
\[G[1] = G \quad \text{and} \quad G[k+1] = [G[k],G] \quad \quad (k \geq 1).\]
Another way of saying this is that $G[k+1]$ is the smallest normal subgroup of $G$ such that $G/G[k+1]$ is
$k$-step nilpotent.  The lower central series of $\Torelli_g^b$ has connections to number theory
(see, e.g.,~\cite{Matsumoto}) and 3-manifolds (see, e.g.,~\cite{GaroufalidisLevine}).  Despite these connections,
the structure of the lower central series of $\Torelli_g^b$ is largely a mystery.  One
of the few large-scale structural results known about it is a theorem of Hain~\cite{HainInfinitesimal} giving
a finite presentation for the Lie algebra obtained from the lower central series of the Torelli group.

\para{Finiteness properties}
A classical theorem of Dehn~\cite{DehnGen} from 1938 says that $\Mod_g^b$ is finitely generated.  Since
$\Torelli_g^b$ is an infinite-index normal subgroup of $\Mod_g^b$, the naive guess would be
that $\Torelli_g^b$ is not finitely generated, and indeed McCullough--Miller~\cite{McCulloughMiller}
proved that $\Torelli_2^b$ is not finitely generated.  However, a deep and surprising theorem of
Johnson~\cite{JohnsonFinite} says that $\Torelli_g^b$ is finitely generated for $g \geq 3$.  Another
paper of Johnson~\cite{JohnsonAbel} proves that $\Torelli_g^b[2]$ is commensurable with the
{\em Johnson kernel} subgroup $\JohnsonKer_g^b$, that is, the subgroup of $\Mod_g^b$ generated
by Dehn twists about simple closed separating curves.  The group $\JohnsonKer_g^b$ is the subject
of a large literature; in particular, a recent deep paper of Dimca--Papadima~\cite{DimcaPapadimaKg}
proves that $\HH_1(\JohnsonKer_g^b;\Q)$ is finite dimensional for $g \geq 4$.  The group
$\HH_1(\JohnsonKer_g^b;\Q)$ was later computed for $g \geq 6$ by Dimca--Hain--Papadima~\cite{DimcaPapadimaHainKg}.

\para{Finite generation}
In a very recent breakthrough,
Ershov--He~\cite{ErshovHe} prove the following.
\begin{compactitem}
\item $[\Torelli_g^b,\Torelli_g^b]$ and $\JohnsonKer_g^b$ are finitely generated if $g \geq 5$
\item For any subgroup $\subgp\subset \Torelli_g^b$ containing $\Torelli_g^b[k]$, the abelianization $\HH_1(\subgp;\Z)$ is finitely generated for $k \geq 2$ and $g \geq 12(k-1)$.
\end{compactitem}
Our first main theorem extends the results of Ershov--He by proving that such a subgroup $\subgp$ is in fact finitely generated.

\begin{maintheorem}
\label{maintheorem:torellilcs}
For $b=0$ or $b=1$ and any $k\geq 2$, let $\subgp \subset \Torelli_g^b$ be a subgroup such that $\Torelli_g^b[k] \subset \subgp$.  
Then $\subgp$ is finitely generated
as long as $g\geq 6k-4$.
\end{maintheorem}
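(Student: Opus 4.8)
The plan is to reduce to finite generation of the single group $\Torelli_g^b[k]$, to induct on $k$ (equivalently, to descend the lower central series of $\Torelli_g^b$ starting from Johnson's theorem that $\Torelli_g^b$ is finitely generated for $g\geq 3$), and at each stage to pass from the finite generation already established for $\Torelli_g^b[k-1]$ to finite generation of $\Torelli_g^b[k]=[\Torelli_g^b[k-1],\Torelli_g^b]$ in two steps: first a soft argument producing a finite \emph{normal} generating set, then a harder argument --- using finiteness of first homology and the arithmeticity of $\Sp_{2g}(\Z)$ --- that upgrades this to an honest finite generating set.

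For the reduction: since $\Torelli_g^b$ is finitely generated, $\Torelli_g^b/\Torelli_g^b[k]$ is finitely generated nilpotent, hence polycyclic, so every subgroup of it --- in particular $\subgp/\Torelli_g^b[k]$ --- is finitely generated; thus $\subgp$ is finitely generated once $\Torelli_g^b[k]$ is, and it suffices to treat $\subgp=\Torelli_g^b[k]$. For the soft step of the induction: if $\phi_1,\dots,\phi_s$ generate $\Torelli_g^b[k-1]$ (finitely generated by the inductive hypothesis, applicable since $g\geq 6k-4\geq 6(k-1)-4$) and $\psi_1,\dots,\psi_r$ generate $\Torelli_g^b$, then standard commutator calculus shows the $[\phi_i,\psi_j]$ generate $\Torelli_g^b[k]$ as a normal subgroup of $\Torelli_g^b$; alternatively one may quote the explicit finite normal generating set inside $\Mod_g^b$ provided by Part~I. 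This alone does not give finite generation: the commutator subgroup of a finitely generated group can fail to be finitely generated.

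The hard step is the upgrade. The extra ingredient is that $\HH_1(H;\Z)$ is finitely generated for every subgroup $H$ with $\Torelli_g^b[k]\subseteq H\subseteq\Torelli_g^b$ --- the theorem of Ershov--He, which I would expect to re-prove here in the range $g\geq 6k-4$. To use this I would work inside $\Mod_g^b$: the group $\Torelli_g^b[k]$ is normal in the finitely generated group $\Mod_g^b$, and $\Mod_g^b/\Torelli_g^b=\Sp_{2g}(\Z)$ is boundedly generated by elementary symplectic matrices. Running down the subnormal series $\Torelli_g^b=\Torelli_g^b[1]\triangleright\cdots\triangleright\Torelli_g^b[k]$, whose successive quotients are finitely generated abelian, one must at each stage deduce finite generation of $\Torelli_g^b[j]$ from that of $\Torelli_g^b[j-1]$; as $\Torelli_g^b[j]$ contains $[\Torelli_g^b[j-1],\Torelli_g^b[j-1]]$ and $\Torelli_g^b[j-1]$ has finitely generated abelianization, this reduces to controlling that commutator subgroup, and there one combines the finite generation of the various $\HH_1$'s with bounded generation of $\Sp_{2g}(\Z)$ to cap the number of conjugators required. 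Making this descent go through is the principal obstacle: the implication ``finitely normally generated and finitely generated first homology $\Rightarrow$ finitely generated'' is false in general, so the special geometry of $\Mod_g^b$ (encoded in Part~I) and the rigidity of $\Sp_{2g}(\Z)$ must both be used in an essential way.
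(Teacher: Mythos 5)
Your reduction to $\subgp=\Torelli_g^b[k]$ (via the fact that $\Torelli_g^b/\Torelli_g^b[k]$ is finitely generated nilpotent, hence polycyclic) is correct and is exactly how the paper begins. Your "soft step" --- finite normal generation of $\Torelli_g^b[k]$ by iterated commutators of generators --- is also correct and matches the paper's Claim~\ref{claim:Gknormalgen}. The problem is the "hard step," where you have identified the right difficulty but not the right tool, and the mechanism you gesture at would not close the gap.

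The essential missing ingredient is the Bieri--Neumann--Strebel invariant $\Sigma(G)$. Brown's theorem (Proposition~\ref{prop:KMM} in the paper, via Koban--McCammond--Meier) gives a criterion, in terms of $\R$-tree actions and commuting families, for when a kernel of a map to an abelian group is finitely generated: for $N\normal G$ with $G/N$ abelian, $N$ is finitely generated if and only if $\Sigma(G)\supset S(G/N)$. The inductive step in the paper is precisely to show $S(G_n[k]/G_n[k+1])\subset\Sigma(G_n[k])$; this is where all the work happens, and it appears nowhere in your proposal. The role of the arithmetic quotient $\Sp_{2g}(\Z)$ in the paper is also not bounded generation. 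Rather, it is \emph{Borel density}: $\Sp_{2g}(\Z)$ is Zariski-dense in $\Sp_{2g}$, and since the induced action on $L_n[k]=(G_n[k]/G_n[k+1])\otimes\R$ extends to an algebraic representation of $\Sp_{2g}$, no nonzero character $\lambda$ on $L_n[k]$ can have its entire $\Sp_{2g}(\Z)$-orbit killed on one of the finitely many distinguished subspaces $L_n[k](I)$. This lets one replace $\lambda$ by a translate $\gamma^*\lambda$ that survives on every $G_n[k](I)$ simultaneously, after which one builds a connected commuting family and applies Brown's criterion. Bounded generation of $\Sp_{2g}(\Z)$ by elementary symplectics is a true but unrelated fact; it bounds word length in the quotient, but gives no control over conjugators needed to normally generate a subgroup of $\Torelli_g^b$, and there is no visible way to "cap the number of conjugators" from it. You yourself note that normal finite generation plus finite $\HH_1$ does not imply finite generation in general; the BNS mechanism is exactly what supplies the missing implication here, and without it the argument does not go through.

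Two smaller remarks. First, the paper does not re-prove the Ershov--He $\HH_1$ finiteness as an intermediate step; it proves finite generation directly via BNS, and the $\HH_1$ result falls out as a corollary. Second, the paper proves an abstract structure theorem for $\FInc$-groups (Theorem~\ref{maintheorem:stronger}) and then verifies the hypotheses for $\Torelli$ and $\IA$ simultaneously; your plan works only with the mapping class group, which is fine for Theorem~\ref{maintheorem:torellilcs} alone but would force a second parallel argument for $\IA_n$.
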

In particular, the $k^{\text{th}}$ term of the lower central series $\Torelli_g^b[k]$ is itself finitely generated for $g \geq 6k-4$. We show in \S\ref{section:reductions} that the theorem reduces to the case of $\Torelli_g^b[k]$ itself.

\begin{remark}
As we mentioned above, Ershov--He proved the special case of Theorem~\ref{maintheorem:torellilcs} when $k=2$. Our proof of Theorem~\ref{maintheorem:torellilcs} is self-contained, so it does not \emph{depend} on the results of Ershov--He. Rather, in the case $k=2$ our proof reduces to an argument essentially equivalent to the original Ershov--He proof. The same comment applies to Theorem~\ref{maintheorem:ialcs} below.
\end{remark}

\begin{remark}
After distributing a preliminary version of this paper, we learned that Ershov--He have independently proven a result
that is almost identical to Theorem \ref{maintheorem:torellilcs} (and similarly for Theorem~\ref{maintheorem:ialcs} below).
\end{remark}

\para{The Johnson filtration}
We want to highlight an important special case of Theorem \ref{maintheorem:torellilcs}.
Fix some $g \geq 0$ and $b \leq 1$.  Pick a basepoint $\ast \in \Sigma_g^b$; if $b=1$, then
choose $\ast$ such that it lies in $\partial \Sigma_g^b$.  Define $\pi = \pi_1(\Sigma_g^b,\ast)$.  Since
$\Mod_g^1$ is built from diffeomorphisms that fix $\partial \Sigma_g^1$ (and thus fix $\ast$), there
is a homomorphism $\Mod_g^1 \rightarrow \Aut(\pi)$.  For closed surfaces, there is no fixed basepoint, so
we only obtain a homomorphism $\Mod_g \rightarrow \Out(\pi)$.  In both cases, this action preserves
the lower central series of $\pi$, so we obtain homomorphisms
\[\psi_g^1[k] \colon \Mod_g^1 \rightarrow \Aut(\pi/\pi[k]) \quad \text{and} \quad \psi_g[k]\colon \Mod_g \rightarrow \Out(\pi/\pi[k]).\]
The $k^{\text{th}}$ term of the {\em Johnson filtration} on $\Mod_g^b$, denoted $\Johnson_g^b(k)$, is the kernel
of $\psi_g^b[k+1]$.  This filtration was introduced by Johnson in 1981~\cite{JohnsonSurvey}.

Chasing the definitions, we find that $\Johnson_g^b(1) = \Torelli_g^b$.  Moreover,
Johnson~\cite{JohnsonKer} proved that $\Johnson_g^b(2) = \JohnsonKer_g^b$.
It is easy to see that $\Torelli_g^b[k]\subset\Johnson_g^b(k)$ for all $k$, but these filtrations are
known not to be equal.  In fact, Hain~\cite{HainInfinitesimal} proved that they even define inequivalent topologies
on $\Torelli_g^b$.  Since $\Torelli_g^b[k]\subset\Johnson_g^b(k)$, the following result is a special case of Theorem \ref{maintheorem:torellilcs}.

\begin{maintheorem}
\label{maintheorem:torellijohnson}
For $b=0$ or $b=1$ and any $k\geq 2$, the $k^{\text{th}}$ term of the Johnson filtration $\Johnson_g^b(k)$ is finitely generated for $g \geq 6k-4$.
\end{maintheorem}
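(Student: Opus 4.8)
The plan is to deduce Theorem~\ref{maintheorem:torellijohnson} directly from Theorem~\ref{maintheorem:torellilcs}. Since $\Torelli_g^b[k] \subset \Johnson_g^b(k)$, the subgroup $\subgp = \Johnson_g^b(k)$ satisfies the hypothesis of Theorem~\ref{maintheorem:torellilcs}, so that theorem immediately yields that $\Johnson_g^b(k)$ is finitely generated for $g \geq 6k-4$. Thus the only point requiring justification is the inclusion $\Torelli_g^b[k] \subset \Johnson_g^b(k)$, and all of the substantive content is already contained in the proof of Theorem~\ref{maintheorem:torellilcs}.

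To verify $\Torelli_g^b[k] \subset \Johnson_g^b(k)$, I would use the standard fact that the Johnson filtration is a strongly central filtration, i.e.\ $[\Johnson_g^b(i), \Johnson_g^b(j)] \subset \Johnson_g^b(i+j)$ for all $i,j \geq 1$. This is elementary: writing $N_m = \pi/\pi[m+1]$ and letting $U_m^{(\ell)}$ be the kernel of the restriction map $\Aut(N_m) \to \Aut(N_\ell)$ when $b=1$ (respectively $\Out(N_m) \to \Out(N_\ell)$ when $b=0$), an element of $U_m^{(\ell)}$ sends each $x$ to $x$ times an element of $\pi[\ell+1]/\pi[m+1]$, and expanding commutators modulo $\pi[i+j+1]$ — using the identity $[\pi[i+1],\pi[j+1]] \subset \pi[i+j+2]$ — shows $[U_m^{(i)}, U_m^{(j)}] \subset U_m^{(i+j)}$ (for the closed case one also notes that $\ker(\Out(N_m)\to\Out(N_\ell))$ is the image of $\ker(\Aut(N_m)\to\Aut(N_\ell))$). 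Granting this, the inclusion follows by induction on $k$: the base case $k=1$ is the identity $\Johnson_g^b(1) = \Torelli_g^b$ recorded in the introduction, and the inductive step is
\[\Torelli_g^b[k] = [\Torelli_g^b[k-1], \Torelli_g^b] \subset [\Johnson_g^b(k-1), \Johnson_g^b(1)] \subset \Johnson_g^b(k).\]

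There is no real obstacle in this deduction: once Theorem~\ref{maintheorem:torellilcs} is in hand, Theorem~\ref{maintheorem:torellijohnson} is a purely formal consequence, needing no new genus bound and no input specific to the Johnson filtration beyond the routine filtration-level inclusion above. The difficulty is entirely concentrated in the proof of Theorem~\ref{maintheorem:torellilcs} itself — that is where one must actually produce finite generating sets — so the "hard part'' of Theorem~\ref{maintheorem:torellijohnson} has already been done by the time one reaches this statement.
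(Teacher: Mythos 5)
Your proposal matches the paper's approach exactly: the paper records the inclusion $\Torelli_g^b[k]\subset\Johnson_g^b(k)$ as a well-known fact (``it is easy to see'') and then observes that Theorem~\ref{maintheorem:torellijohnson} is the special case $\subgp=\Johnson_g^b(k)$ of Theorem~\ref{maintheorem:torellilcs}. Your sketch of the strongly-central property of the Johnson filtration just fills in the detail the paper leaves implicit, and it is correct.
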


\para{Automorphism groups of free groups}
Let $F_n$ be a free group on $n$ generators.  We also have a version of Theorems~\ref{maintheorem:torellilcs} and \ref{maintheorem:torellijohnson} for
$\Aut(F_n)$.  The group $\Aut(F_n)$ acts on $F_n^{\text{ab}} = \Z^n$.  The kernel of this action is the
{\em Torelli subgroup} of $\Aut(F_n)$ and is denoted $\IA_n$. A classical theorem of 
Magnus~\cite{MagnusGenerators} from 1935 shows that $\IA_n$ is finitely generated for all $n$.
Building on the aforementioned work of Dimca--Papadima~\cite{DimcaPapadimaKg} for the mapping class group,
Papadima--Suciu~\cite{PapadimaSuciuKg} proved that $\HH_1(\IA_n[2];\Q)$ is finite-dimensional
for $n \geq 5$.  Ershov--He's breakthrough paper~\cite{ErshovHe} also applies to $\IA_n$, proving the following.
\begin{compactitem}
\item $[\IA_n,\IA_n]$ is finitely generated if $n \geq 4$.
\item For any subgroup $\subgp\subset \IA_n$ containing $\IA_n[k]$, the abelianization $\HH_1(\subgp;\Z)$ is finitely generated if $k \geq 2$ and $n \geq 12(k-2)$.
\end{compactitem}

We extend the results of Ershov--He by proving that such a subgroup $\subgp$ is in fact finitely generated.

\begin{maintheorem}
\label{maintheorem:ialcs}
For any $k\geq 2$, let $\subgp \subset \IA_n$ be a group such that $\IA_n[k] \subset \subgp$.
Then $\subgp$ is finitely generated as long as $n\geq 6k-4$.
\end{maintheorem}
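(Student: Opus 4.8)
The plan is to parallel the argument for the mapping class group case (Theorem~\ref{maintheorem:torellilcs}), replacing the symplectic group and the surface group by $\GL_n(\Z)$ and the free group $F_n$, and then running the same machine. First I would reduce to the case $\subgp = \IA_n[k]$ itself, by the same argument from \S\ref{section:reductions}: if $\IA_n[k] \subset \subgp \subset \IA_n$, then $\subgp$ fits in an extension $1 \to \IA_n[k] \to \subgp \to \subgp/\IA_n[k] \to 1$ with $\subgp/\IA_n[k]$ a subgroup of the finitely generated nilpotent group $\IA_n/\IA_n[k]$, hence finitely presented; so if $\IA_n[k]$ is finitely generated as a $\subgp$-group (indeed as an $\IA_n$-group it suffices), then $\subgp$ is finitely generated. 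So the real content is showing $\IA_n[k]$ is finitely generated when $n \geq 6k-4$.

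The key tool, as in the Torelli case, will be a highly-connected simplicial complex on which $\Aut(F_n)$ acts with good stabilizers — the analogue of the complex of curves or the arc complex. The standard choice is the complex of partial bases of $F_n$ (or a suitable variant, e.g. the poset of free factors), whose high connectivity in a range $n \gg 0$ is known, and whose $\Aut(F_n)$-stabilizers of simplices are (up to finite-index and direct-factor issues) built from $\Aut(F_m)$'s and $F_m$'s with smaller $m$. One then restricts this action to $\IA_n$, checks that $\IA_n[k]$ acts on the same complex, and identifies the simplex stabilizers inside $\IA_n[k]$: the point is that the stabilizer in $\IA_n[k]$ of a partial basis splits (again up to commensurability) as an extension involving $\IA_m[k]$ for $m < n$ together with abelian/nilpotent pieces coming from the lower central series of $F_n$. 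Feeding this into the usual finite-generation criterion for group actions on highly-connected complexes (a group acting cocompactly on a simply-connected complex with finitely generated vertex stabilizers and finitely generated ``edge'' data is finitely generated), one gets finite generation of $\IA_n[k]$ by induction on $n$, provided the connectivity bound for the complex — which is roughly linear in $n$ — is compatible with the recursion, i.e. with $n \geq 6k-4$. The numerical bookkeeping of how the bound $6k-4$ propagates through the induction is exactly parallel to the mapping class group argument, the only difference being the slightly different connectivity constants for the free-group complexes versus the surface complexes.

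The main obstacle — and the step I expect to occupy most of the work — is precisely the identification and control of the stabilizers of simplices in $\IA_n[k]$, not in $\Aut(F_n)$ or even in $\IA_n$. Stabilizers of a partial basis in $\Aut(F_n)$ are comparatively easy (they are essentially $\Aut$ of a complementary free factor, together with a ``twisting'' part), and in $\IA_n$ one already has a clean description from the literature; but intersecting with the $k$th term of the lower central series does not commute with these splittings on the nose. One has to show that the relevant subgroups of $F_n$ and of $\IA_n$ behave well under the lower central series — e.g. that $[\IA_n[k-1], \text{stabilizer}]$ is still captured — which is where the Johnson-homomorphism-style computations enter, and where the genus/rank bound is actually consumed. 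Concretely, one needs that certain natural maps between $\gr$ of the Johnson filtrations of $\IA_m$ for varying $m$ are surjective in the stable range, a statement of ``representation-stability'' flavor; establishing this surjectivity (equivalently, that the associated $\FInc$-module, in the notation of the paper, is finitely generated in the stated degree) is the technical heart. Everything else — the reduction from $\subgp$ to $\IA_n[k]$, the high-connectivity input, and the final assembly via the action-on-a-complex criterion — is then formal.
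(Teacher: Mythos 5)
Your proposed route is a genuinely different approach from the one in the paper, and it has serious gaps that the paper's approach was specifically designed to avoid.

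The paper does \emph{not} argue via an action on a highly-connected complex with an inductive analysis of stabilizers. Instead, it proves an abstract structure theorem (Theorem~\ref{maintheorem:stronger}) for commuting \FIncgroups{} using Brown's criterion for the Bieri--Neumann--Strebel invariant~\cite{BrownBNS}, in the formulation of Koban--McCammond--Meier (Proposition~\ref{prop:KMM}). One shows that every nonzero character $\lambda$ on $G_n[k]/G_n[k+1]$ lies in $\Sigma(G_n[k])$ by exhibiting a $\lambda$-surviving set $A$ of conjugates $g_I^\alpha$ with connected commutation graph (Claim~\ref{claim:handlecomplex}) that dominates a generating set. The role of the action of $\Gamma_n=\SL_n\Z$ is precisely to ``tilt'' $\lambda$ so that it survives on every $G_n[k](I)$: this is Claim~\ref{claim:tiltgamma}, which uses Borel density to replace the $\Gamma_n$-orbit by a Zariski-dense orbit and then an irreducibility argument. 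Crucially, the paper explicitly notes that this argument does \emph{not} require induction on $n$, and it never analyzes stabilizers.

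Your approach faces two concrete obstructions. First, cocompactness fails: $\IA_n[k]$ does not act with finitely many orbits on the complex of partial bases (or on a free-factor complex), because the quotient $\Aut(F_n)/\IA_n[k]$ contains the infinite nilpotent group $\IA_n/\IA_n[k]$, so there are infinitely many $\IA_n[k]$-orbits of vertices in each $\Aut(F_n)$-orbit. Without finitely many orbits, the standard ``connected complex $+$ f.g.\ stabilizers $\Rightarrow$ f.g.\ group'' criterion does not apply. Second, even granting some workaround for cocompactness, the claim that the stabilizer in $\IA_n[k]$ of a partial basis ``splits (up to commensurability) as an extension involving $\IA_m[k]$'' is unsupported and is in fact the hard part: intersecting with the lower central series term destroys the semidirect product structure of the $\Aut(F_n)$-stabilizer, and establishing finite generation of these stabilizers appears to be essentially equivalent to the theorem you are trying to prove. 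This is exactly the wall that made the problem hard for decades; the Ershov--He insight, adopted by the paper, is to bypass stabilizer analysis entirely via the BNS invariant.

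(A minor point on the reduction: you only need $\overline{\subgp}=\subgp/\IA_n[k]$ to be finitely \emph{generated}, which holds since subgroups of finitely generated nilpotent groups are finitely generated; finite presentability of $\overline{\subgp}$ is true but not what is used, and does not by itself let you conclude $\subgp$ is finitely generated without first knowing $\IA_n[k]$ is.)
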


In particular, $\IA_n[k]$ is itself finitely generated for $n \geq 6k-4$. We show in \S\ref{section:reductions} that the theorem reduces to the case of $\IA_n[k]$ itself.

\begin{remark}
One can also consider the Torelli subgroup $\IO_n$ of $\Out(F_n)$.  The homomorphism $\IA_n \rightarrow \IO_n$
is surjective, so Theorem \ref{maintheorem:ialcs} also implies a similar result for the lower central
series of $\IO_n$.
\end{remark}

\para{Johnson filtration for automorphism group of free group}
Just like for the mapping class group, there is a natural homomorphism
\[\psi_n[k]\colon \Aut(F_n) \rightarrow \Aut(F_n / F_n[k]).\]
The $k^{\text{th}}$ term of the {\em Johnson filtration} for $\Aut(F_n)$, denoted
$\JIA_n(k)$, is the kernel of ${\psi_n[k+1]}$.  This filtration was actually introduced
by Andreadakis~\cite{Andreadakis} in 1965, much earlier than the Johnson filtration for
the mapping class group.  It is well-known that $\IA_n[k] \subset \JIA_n(k)$.  However, 
it is an open problem whether or not these two filtrations are equal (or at least
commensurable).  In any case, since $\IA_n[k] \subset \JIA_n(k)$, the following result is a special case of Theorem \ref{maintheorem:ialcs}.

\begin{maintheorem}
\label{maintheorem:iajohnson}
For any $k\geq 2$, the group $\JIA_n(k)$ is finitely generated for $n \geq 6k-4$.
\end{maintheorem}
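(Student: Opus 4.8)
The plan is to deduce Theorem~\ref{maintheorem:iajohnson} immediately from Theorem~\ref{maintheorem:ialcs}, using the fact that $\JIA_n(k)$ is a subgroup of $\IA_n$ that contains $\IA_n[k]$. Once those two containments are in place, Theorem~\ref{maintheorem:ialcs} applies verbatim with $\subgp = \JIA_n(k)$, and there is nothing left to do.

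First I would check $\JIA_n(k) \subset \IA_n$. By definition $\JIA_n(k) = \Ker(\psi_n[k+1])$, and the quotient map $F_n / F_n[k+1] \onto F_n / F_n[2] = \Z^n$ is $\Aut(F_n)$-equivariant, so any automorphism acting trivially on $F_n/F_n[k+1]$ also acts trivially on $\Z^n$; hence $\JIA_n(k) \subset \JIA_n(1) = \IA_n$ for every $k \geq 1$. Next I would recall the containment $\IA_n[k] \subset \JIA_n(k)$, which is standard and is noted in the text above. It follows from the fact that the Andreadakis–Johnson filtration is a central filtration, i.e.\ $[\JIA_n(i), \JIA_n(j)] \subset \JIA_n(i+j)$ for all $i,j \geq 1$; combined with $\JIA_n(1) = \IA_n$, an induction on $k$ gives the claim, the inductive step being $\IA_n[k+1] = [\IA_n[k], \IA_n] \subset [\JIA_n(k), \JIA_n(1)] \subset \JIA_n(k+1)$.

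With these elementary facts established, $\subgp \coloneq \JIA_n(k)$ is a subgroup of $\IA_n$ with $\IA_n[k] \subset \subgp$, so Theorem~\ref{maintheorem:ialcs} shows $\JIA_n(k)$ is finitely generated whenever $n \geq 6k-4$, which is exactly the statement of Theorem~\ref{maintheorem:iajohnson}. I do not expect any obstacle: the theorem is genuinely a special case of Theorem~\ref{maintheorem:ialcs}, and all of the real work lies upstream in the proof of that theorem rather than in this deduction.
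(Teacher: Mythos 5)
Your proposal is correct and matches the paper exactly: Theorem~\ref{maintheorem:iajohnson} is deduced as a special case of Theorem~\ref{maintheorem:ialcs} using the containments $\IA_n[k]\subset \JIA_n(k)\subset \IA_n$, just as the paper states. The only difference is that you spell out the standard proof that $\IA_n[k]\subset\JIA_n(k)$ (via the fact that the Andreadakis filtration is central), whereas the paper simply cites this as well known.
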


\para{$\BFInc$-groups}
We give an easy reduction in \S\ref{section:reductions} showing that to prove Theorem~\ref{maintheorem:torellilcs}, it suffices to prove that $\Torelli_g^1[k]$ is finitely generated for $g \geq 6k-4$. Similarly, we show that to prove Theorem~\ref{maintheorem:ialcs}, it suffices to prove that $\IA_n[k]$ is finitely generated for $n \geq 6k-4$.
We  prove both these results
as consequences of a single abstract theorem. This result is essentially a structure theorem applying to the FI-groups and weak FI-groups appearing in our earlier paper~\cite{ChurchPutmanGenJohnson}. In essence, it says that if a weak FI-group satisfying certain additional conditions is finitely generated in finite degree, then its lower central series is finitely generated.

However, the technicalities of weak FI-groups often make arguments quite difficult to read.
To simplify the presentation of the argument, we have written our theorem in terms of a simpler structure, namely  \FIncgroups{}, which we now define. We will not mention FI-groups or weak FI-groups in the rest of the paper (with the exception of the tangential Remarks~\ref{remark:gendegree} and~\ref{remark:FInc-vs-FI}), so the reader need not be familiar with these at all.

Define $\N = \{1,2,3,\ldots\}$.  Let $\FInc$ be the category whose
objects are finite subsets of $\N$ and whose morphisms are inclusions.  An
{\em \FIncgroup{}} is a functor from $\FInc$ to the category of groups.  More
concretely, an \FIncgroup{} $G$ consists of the data of a group $G_I$ for each
finite $I \subset \N$ together with homomorphisms $G_I^J\colon G_I \rightarrow G_J$ whenever
$I \subset J$.  These homomorphisms must satisfy $G_J^K \circ G_{I}^J = G_I^K$ whenever $I \subset J \subset K$.

\para{Structure theorem}
Stating our theorem requires some preliminary definitions. 
Given an \FIncgroup{} $G$, for finite $I \subset J \subset \N$ we will denote by $G_I(J)$ the image
of the map $G_I^J \colon G_I \rightarrow G_J$.  For 
$n \in \N$, let $[n] = \{1,\ldots,n\} \subset \N$. We will write $G_n$ and $G_n(I)$ instead
of $G_{[n]}$ and $G_{[n]}(I)$.

We will say that an \FIncgroup{} $G$ is a {\em commuting \FIncgroup{}} if for all finite $K \subset \N$ and all disjoint $I,J \subset K$, there
is some $G_K$-conjugate of $G_K(I) \subset G_K$ that commutes with $G_K(J)$.

Finally, we say that an \FIncgroup{} $G$ is {\em generated in degree $d$} if for all $n\geq d$, the group $G_n$ is generated by the $\binom{n}{d}$ subgroups  $G_n(I)$ for $I\in \binom{[n]}{d}$. (Throughout the paper,  $\binom{X}{d}$ denotes the set of $d$-element subsets of $X$.)

\begin{remark}
\label{remark:gendegree}
Readers familiar with FI-modules from~\cite{ChurchEllenbergFarbFI} or weak FI-groups from~\cite{ChurchPutmanGenJohnson} might expect this to be equivalent to the condition that for all $K$ with $\abs{K}\geq d$, the group $G_K$ is generated by the subgroups  $G_K(I)$ for $I\in \binom{K}{d}$. This is \emph{not} actually equivalent, and so we have taken the weaker definition (since it suffices for our main theorem). It also makes it easier to verify that $\Torelli$ is generated in degree 3 (compare our proof in \S\ref{section:verify} with the remark preceding \cite[Lemma 4.6]{ChurchPutmanGenJohnson}, which is no longer necessary with this definition).
\end{remark}

Our main technical theorem is the following structure theorem.
\begin{maintheorem}
\label{maintheorem:FIncgroup}
Let $G$ be a commuting \FIncgroup{} such for each $n$ there exists a connected semisimple $\R$-algebraic group without compact factors $\algp_n$, a lattice $\Gamma_n\subset \algp_n(\R)$, and an action of $\Gamma_n$ on $G_n$ by outer automorphisms such that the following two conditions hold.
\begin{compactitem}
\item For any subsets $K,K'\subset [n]$ with $\abs{K}=\abs{K'}$, there exists some $\gamma\in \Gamma_n$ such that $\gamma\cdot G_n(K)$ is conjugate to $G_n(K')$ inside $G_n$.
\item The induced action of $\Gamma_n$ on $\HH_1(G_n;\R)$ extends to an algebraic representation of $\algp_n$.
\end{compactitem}
Suppose that $G$ is generated 
in degree $d$ as an \FIncgroup{} and $G_d$ is finitely generated.
Then for each $k\geq 1$ the group $G_n[k]$ is finitely generated for all  $n\geq (2k-1)d$.
\end{maintheorem}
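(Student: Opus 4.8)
The plan is to prove, by induction on $k$, the slightly stronger statement $(\star_k)$: for every commuting \FIncgroup{} $G$ satisfying the two bulleted hypotheses with generation degree $d$ and $G_d$ finitely generated, and for every $n \geq (2k-1)d$, the group $G_n[k]$ is finitely generated \emph{and} the action of $\Gamma_n$ on $\HH_1(G_n[k];\R)$ extends to an algebraic representation of $\algp_n$. The base case $k=1$ is immediate: for $n \geq d$ the group $G_n[1] = G_n$ is generated by the $\binom{n}{d}$ subgroups $G_n(I)$ with $I \in \binom{[n]}{d}$, each a quotient of $G_I \cong G_d$ and hence finitely generated, so $G_n$ is finitely generated; and the homology clause is exactly the second bulleted hypothesis.

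For the inductive step, fix $G$ and fix $n \geq (2k+1)d$, so that $(\star_k)$ applies to $G_n$. First I would realize the $\Gamma_n$-action by a genuine extension $1 \to G_n \to \widehat{G}_n \to \Gamma_n \to 1$ (this is available in the two applications, where $\widehat{G}_n$ is $\Mod_g^1$ resp.\ $\Aut(F_n)$; in general one works with a suitable extension of a finite-index subgroup of $\Gamma_n$, which does not affect the argument). Two finiteness facts about $\widehat{G}_n$ are the engine. First, $\widehat{G}_n$ is finitely generated: $\Gamma_n$ is a finitely generated lattice, $G_n(I_0)$ is finitely generated for $\abs{I_0} = d$, and by the first bulleted hypothesis every $G_n(I)$ with $\abs{I} = d$ is a $G_n$-conjugate of a $\Gamma_n$-translate of $G_n(I_0)$, so these subgroups together generate $\widehat{G}_n$. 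Second, $\widehat{G}_n / G_n[k+1]$ is finitely presented: lattices in semisimple Lie groups without compact factors are finitely presented, $G_n/G_n[k+1]$ is finitely generated nilpotent and hence finitely presented, and an extension of one finitely presented group by another is finitely presented. From these, the standard exact-sequence argument shows $G_n[k+1]$ is finitely generated \emph{as a normal subgroup} of $\widehat{G}_n$; equivalently, $\HH_1(G_n[k+1];\Z)$ is a finitely generated module over $\Z[\widehat{G}_n/G_n[k+1]]$.

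The heart of the matter is promoting this to genuine finite generation of $G_n[k+1]$, and this is where the remaining hypotheses enter. I would first show that $\HH_1(G_n[k+1];\R)$ again extends to an algebraic representation of $\algp_n$, in particular is finite-dimensional. The five-term exact sequence of $1 \to G_n[k+1] \to G_n \to G_n/G_n[k+1] \to 1$ identifies the $G_n/G_n[k+1]$-coinvariants of $\HH_1(G_n[k+1];\R)$ with a quotient of $\HH_2(G_n/G_n[k+1];\R)$; since $G_n/G_n[k+1]$ is finitely generated nilpotent with abelianization $\HH_1(G_n;\R)$, its second homology is built functorially from $\HH_1(G_n;\R)$ by tensor, exterior, sub- and quotient operations, and is therefore algebraic by the base hypothesis. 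Borel density---$\Gamma_n$ is Zariski dense in $\algp_n$, which has no compact factors, so every $\Gamma_n$-subrepresentation of an algebraic representation is $\algp_n$-stable and there are no spurious $\Gamma_n$-invariants---then lets one use the module structure over the $I$-adically filtered ring $\R[G_n/G_n[k+1]]$ to conclude that the finitely generated $\widehat{G}_n/G_n[k+1]$-module $\HH_1(G_n[k+1];\R)$ is itself finite-dimensional and algebraic. Separately, the commuting-$\FInc$-group structure is used to control supports: a $(k+1)$-fold commutator of elements of the $G_n(I)$'s lies in the image of $G_J[k+1]$ for some $J$ with $\abs{J} \leq (k+1)d$, and the near-commutation of disjoint generators lets one rewrite an arbitrary such commutator in terms of ones whose supporting set has size at most $(2k+1)d$; with the first bulleted hypothesis this exhibits the sub-\FIncgroup{} $G[k+1]$ as generated in degree $(2k+1)d$. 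Finally, finite-dimensionality of $\HH_1(G_n[k+1];\R)$, normal finite generation of $G_n[k+1]$ in $\widehat{G}_n$, and this degree bound together yield that $G_{(2k+1)d}[k+1]$ is finitely generated, hence so is $G_n[k+1]$ for every $n \geq (2k+1)d$; this completes $(\star_{k+1})$.

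The main obstacle is this last paragraph, which has two genuinely non-formal ingredients. The combinatorial one is the commutator bookkeeping: turning ``disjoint generators commute up to conjugacy'' into the clean statement that $(k+1)$-fold commutators can be rewritten with support of size at most $(2k+1)d$---the factor $2$ in the stable range $n \geq (2k-1)d$ is precisely the cost of carrying the conjugators through this reduction. The homological one is the rigidity step: examples such as the infinite alternating group sitting as an infinitely generated normal subgroup inside a finitely generated group of the form $A_\infty \rtimes \Z$---where it is even finitely generated as a normal subgroup and has trivial, hence finitely generated, abelianization---show that neither normal finite generation nor finiteness of $\HH_1$ is by itself enough to force finite generation. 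One really must use the algebraicity of the $\algp_n$-action together with Borel density to exclude such behaviour, and making that interaction precise---in particular controlling all of $\HH_1(G_n[k+1];\R)$, not merely its coinvariants---is the genuine technical heart of the theorem.
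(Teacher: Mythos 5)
Your proposal takes a genuinely different route from the paper, but it contains a gap that you yourself acknowledge but do not close. Your strategy is: (1) pass to an ambient extension $\widehat{G}_n$ realizing the $\Gamma_n$-action; (2) deduce that $G_n[k+1]$ is finitely generated as a normal subgroup of $\widehat{G}_n$; (3) show $\HH_1(G_n[k+1];\R)$ is finite-dimensional and carries an algebraic $\algp_n$-action via a Dimca--Papadima style Borel-density argument; and (4) somehow upgrade (2) and (3) plus a combinatorial support bound to actual finite generation of $G_n[k+1]$. Steps (1)--(3) are plausible and could likely be made rigorous (indeed (3) is essentially the content of Dimca--Papadima and Ershov--He's earlier $\HH_1$ theorem). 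But step (4) is precisely the crux of the theorem, and you explicitly concede that it does not follow formally: your own example $A_\infty \rtimes \Z$ shows that normal finite generation plus finitely generated $\HH_1$ does not force finite generation, and you end by saying that ``making that interaction precise ... is the genuine technical heart of the theorem.'' That is an acknowledgment of the gap, not a proof.

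The ingredient your outline is missing is exactly the one the paper is built around: Brown's BNS criterion for finite generation, in the Koban--McCammond--Meier form recorded as Proposition~\ref{prop:KMM}. The paper does not pass through $\widehat{G}_n$, does not show $\HH_1(G_n[k+1];\R)$ is finite-dimensional, and does not argue about normal finite generation. Instead, the induction maintains only that $G_n[k]$ is finitely generated, and the inductive step shows that every nonzero character $\lambda\colon G_n[k] \to \R$ factoring through $G_n[k]/G_n[k+1]$ lies in the BNS invariant $\Sigma(G_n[k])$, which by the Bieri--Neumann--Strebel theorem gives finite generation of $G_n[k+1]$. The algebraicity/Borel-density hypothesis is used only through Claim~\ref{claim:tiltgamma}, to move $\lambda$ by some $\gamma \in \Gamma_n$ so that it is nonvanishing on every $L_n[k](I)$ with $\abs{I}=dk$; this is a much weaker and more targeted use of Zariski density than the module-finiteness argument you sketch. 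The commuting-$\FInc$-group structure is used not to bound generation degree of $G[k+1]$ (the paper only ever establishes \emph{normal} generation in degree $dk$, via Claim~\ref{claim:Gknormalgen}) but to prove connectivity of the auxiliary graph $Y_{n,dk}$ (Claim~\ref{claim:handlecomplex}), which feeds into the connectivity-of-$C(A)$ hypothesis of Proposition~\ref{prop:KMM}. The factor of $2$ in $n \geq (2k-1)d$ arises from the connectivity bound $n \geq 2(dk)+d$ for $Y_{n,dk}$, not, as you suggest, from commutator rewriting carrying conjugators around. In short: your steps (1)--(3) are an independent and largely sound route to the weaker conclusion that $\HH_1(G_n[k+1];\R)$ is finite-dimensional, but the leap from there to finite generation of $G_n[k+1]$ is where the content lives, and it requires the $\R$-tree/BNS machinery that your outline omits.
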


We will prove that both $\Torelli_g^1$ and $\IA_n$ can be endowed with the structure of \FIncgroups{} $\Torelli$ and $\IA$ satisfying
the above conditions with $d=3$. As stated, Theorem~\ref{maintheorem:FIncgroup} would yield the range $n\geq (2k-1)d=6k-3$. However, we give a slight strengthening in Theorem~\ref{maintheorem:stronger}, where a minor condition (satisfied by both $\Torelli$ and $\IA$) allows us to improve this range by 1 to $n\geq 6k-4$, yielding Theorems \ref{maintheorem:torellilcs} and \ref{maintheorem:ialcs}.

\begin{remark}
\label{remark:FInc-vs-FI}
Since Theorem~\ref{maintheorem:FIncgroup} is stated in terms of \FIncgroups{}, it would be natural to think  that it will apply to a much wider class than the weak FI-groups of \cite{ChurchPutmanGenJohnson}. We believe this would be a mistake: the technical conditions we impose in Theorem~\ref{maintheorem:FIncgroup} force our \FIncgroups{} to behave very much like weak FI-groups. Although we no longer explicitly require the permutation group $S_n$ to act on $G_n$, the action of $\Gamma_n$ required in Theorem~\ref{maintheorem:FIncgroup} fulfills much the same role (for example, it forces the subgroups $G_n(K)$ and $G_n(K')$ to be isomorphic). Not only do weak FI-groups provide all the examples of \FIncgroups{} that we will apply our structure theorem to, it is currently hard for us to imagine \emph{any} $\FInc$-group it applies to that is not actually a weak FI-group. So at present it seems that this is only a pedagogical advance, and Theorem~\ref{maintheorem:FIncgroup} is ``really'' a theorem about weak FI-groups (but we would be glad to be proved wrong!).
\end{remark}

\para{Comments on the proof}
In their paper~\cite{ErshovHe}, Ershov--He introduced the beautiful idea of proving that $[\Torelli_g^1,\Torelli_g^1]$
is finitely generated by exploiting a powerful result of Brown~\cite{BrownBNS} that relates the BNS-invariant
of a group to the existence of certain actions on $\R$-trees.  These actions on $\R$-trees can be understood
using the fact that $\Torelli_g^1$ contains large families of commuting elements. They also introduced the idea of using an action of $\SL_n\Z$ to guarantee these commuting elements ``survive'' in an appropriate sense. We will use these same
ideas in our proof of Theorem \ref{maintheorem:FIncgroup}, though we will handle the ideas quite differently.

\para{Outline}
We begin in \S \ref{section:brown} by discussing Brown's criterion.  Next, in \S \ref{section:abstract}
we will prove Theorem~\ref{maintheorem:FIncgroup} (in its strengthened form of Theorem~\ref{maintheorem:stronger}).  Finally, in \S \ref{section:examples} we will verify that this theorem applies to $\Torelli_g^1$ and $\IA_n$, and thus prove Theorems \ref{maintheorem:torellilcs}~and~\ref{maintheorem:ialcs}.

\para{Acknowledgments}
We wish to thank Mikhail Ershov, Benson Farb, Sue He, Dan Margalit, and John Meier for helpful comments.

\section{Brown's criterion for finite generation}
\label{section:brown}

If $V$ is a real vector space, then we will write $\P^+V$ for the positive projectivation $\P^+V={(V-\{0\})/\R^\times_+}$.
If $G$ is a finitely generated group, let $S(G)$ be the ``character sphere'' $S(G)=\P^+\Hom(G,\R)$. If $N\normal\, G$ has abelian quotient $G/N$, the quotient map induces an embedding $\Hom(G/N,\R)\into \Hom(G,\R)$, so we may consider $S(G/N)$ as a canonical subset of $S(G)$ consisting of those homomorphisms $G\to \R$ factoring through $G/N$. 

The Bieri--Neumann--Strebel invariant $\Sigma(G)$ is an open subset of $S(G)$ which is invariant under any automorphism of $G$. 
It has the key property that if $G$ is finitely generated and $G/N$ is abelian, then \[\text{$N$ is finitely generated\qquad $\iff$ \qquad $\Sigma(G)\supset S(G/N)$.}\] 

In the same journal issue where Bieri--Neumann--Strebel defined $\Sigma(G)$, Brown~\cite{BrownBNS} gave a necessary and sufficient criterion describing which $[\lambda]\in S(G)$ belong to the BNS invariant $\Sigma(G)$ in terms of certain actions on $\R$-trees. We will make use of a simpler sufficient condition which is a consequence of Brown's criterion. We use a formulation of this condition given by Koban--McCammond--Meier~\cite{KMM-BNS}, though it goes back further (compare with Meier--VanWyk~\cite[Theorem~6.1]{MeierVanWyk}). A variant of this condition was used by Ershov--He (see~\cite[Proposition~3.4]{ErshovHe}).

Given any subset $A\subset G$, let $C(A)$ denote the ``commutation graph'' whose vertices are the elements of $A$, and where $a$ and $a'$ are connected by an edge if $a$ and $a'$ commute.
We say that a subset $A\subset G$ survives under $\lambda\colon G\to \R$ if $\lambda(a)\neq 0$ for all $a\in A$. For $A,B\subset G$ we say that $A$ dominates $B$ if every $b\in B$ commutes with some $a\in A$.

\begin{proposition}[{\cite[Lemma~1.9]{KMM-BNS}}]
\label{prop:KMM}
Let $G$ be a finitely generated group and $\lambda\colon G\to \R$ a nonzero homomorphism. Suppose there exist subsets $A\subset G$ and $B\subset G$ such that $A$ survives under $\lambda$, $C(A)$ is connected, $A$ dominates $B$, and $B$ generates $G$. Then $[\lambda]\in \Sigma(G)$.
\end{proposition}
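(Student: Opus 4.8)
The plan is to deduce Proposition~\ref{prop:KMM} from Brown's criterion~\cite{BrownBNS}, which characterizes membership $[\lambda]\in\Sigma(G)$ in terms of the action of $G$ on an associated $\R$-tree $T_\lambda$: roughly, $[\lambda]\in\Sigma(G)$ if and only if $G$ is not the ascending union of a chain of proper subgroups, each fixing a point of $T_\lambda$ — equivalently, if the action has a ``finitely dominated'' structure in Brown's sense. Concretely, it suffices to exhibit a finite generating set for $G$ whose elements act on $T_\lambda$ in a controlled way that forces $G$ to stabilize a point (or more precisely, satisfies Brown's $\mathrm{FP}$-type hypothesis at $\lambda$). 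Since the exact bookkeeping of Brown's criterion is somewhat delicate, I would instead cite the packaged version due to Koban--McCammond--Meier directly, as the paper already does, and simply present the \emph{proof as an application of their Lemma~1.9}; there is essentially nothing left to prove beyond translating notation.

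If a self-contained argument is wanted: pick a basepoint $v\in T_\lambda$. Each element $a\in A$ with $\lambda(a)\neq 0$ is a hyperbolic isometry of $T_\lambda$ (its translation length is controlled by $\lvert\lambda(a)\rvert>0$), so it has an axis $L_a\subset T_\lambda$. The key point is that if $a,a'\in A$ commute, then they have a common axis, or at least their axes meet: two commuting isometries of an $\R$-tree cannot have disjoint axes (one would conjugate the other's axis to a disjoint translate, contradicting commutativity combined with the fact that translation along $L_a$ moves $L_{a'}$). Since $C(A)$ is connected, the axes $\{L_a : a\in A\}$ are pairwise linked along a connected ``backbone'', so their union $Y=\bigcup_{a\in A}L_a$ is a connected subtree on which every element of $A$ acts by translation. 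Now any $b\in B$ commutes with some $a\in A$, hence $b$ preserves $L_a\subset Y$, so $b$ preserves a bounded neighborhood structure relative to $Y$; more carefully, $b$ either fixes a point near $Y$ or translates along an axis meeting $Y$. Because $B$ generates $G$, one then checks that $G$ acts on $T_\lambda$ with no ``bad'' ascending chain — every finitely generated subgroup of $G$ is generated by finitely many elements of $B$, each of which interacts boundedly with the fixed connected subtree $Y$ — which is exactly the hypothesis Brown's criterion needs to conclude $[\lambda]\in\Sigma(G)$.

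The main obstacle is the step asserting that commuting hyperbolic isometries of an $\R$-tree share (or have intersecting) axes, and then parlaying the connectivity of $C(A)$ into a single $A$-invariant connected subtree $Y$ that also interacts well with all of $B$; this is precisely the geometric heart of Brown's criterion in the $\R$-tree setting, and it is what Koban--McCammond--Meier have already isolated. Given that Proposition~\ref{prop:KMM} is quoted verbatim from \cite[Lemma~1.9]{KMM-BNS}, the honest ``proof'' here is a one-line citation; I would present it as such and refer the reader to \cite{BrownBNS, KMM-BNS, MeierVanWyk} for the underlying tree combinatorics, noting only that the hypotheses ``$A$ survives under $\lambda$'', ``$C(A)$ connected'', ``$A$ dominates $B$'', and ``$B$ generates $G$'' are exactly what is needed to run Brown's argument.
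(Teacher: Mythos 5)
The paper gives no proof of this proposition at all; it simply states it with the citation to Koban--McCammond--Meier~\cite[Lemma~1.9]{KMM-BNS}, which is exactly the conclusion you reach. Your interleaved $\R$-tree sketch is not needed and is imprecise in spots (your paraphrase of Brown's criterion via ``ascending unions of point stabilizers'' is not quite the statement, and two commuting hyperbolic isometries of a tree in fact share a common axis, not merely intersecting ones), but since you correctly defer to the citation as the ``honest proof,'' your approach matches the paper's.
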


\section{Structure theorem}
\label{section:abstract}
Our goal in this section is to prove Theorem~\ref{maintheorem:FIncgroup}. We will in fact prove the following slightly stronger theorem, which simply restates Theorem~\ref{maintheorem:FIncgroup}
and adds the additional condition \ref{part:improverangebyone}. We emphasize that the \FIncgroups{} $\Torelli$ and $\IA$ to which we will apply the structure theorem in \S\ref{section:examples} both do satisfy the condition \ref{part:improverangebyone}.
\begin{theorem-prime}{maintheorem:FIncgroup}
\label{maintheorem:stronger}
Let $G$ be a commuting \FIncgroup{} such for each $n$ there exists a connected semisimple $\R$-algebraic group without compact factors $\algp_n$, a lattice $\Gamma_n\subset \algp_n(\R)$, and an action of $\Gamma_n$ on $G_n$ by outer automorphisms such that the following two conditions hold.
\begin{compactitem}
\item For any subsets $K,K'\subset [n]$ with $\abs{K}=\abs{K'}$, there exists some $\gamma\in \Gamma_n$ such that $\gamma\cdot G_n(K)$ is conjugate to $G_n(K')$ inside $G_n$.
\item The induced action of $\Gamma_n$ on $\HH_1(G_n;\R)$ extends to an algebraic representation of $\algp_n$.
\end{compactitem}
Suppose that $G$ is generated 
in degree $d$ as an \FIncgroup{} and $G_d$ is finitely generated.  The following then hold.
\begin{compactenum}[label=(\roman*)]
\item\label{part:maintheorem} For each $k\geq 1$ the group $G_n[k]$ is finitely generated for all  $n\geq (2k-1)d$.
\item\label{part:improverangebyone} Suppose that $d\geq 2$ and that $G$ has the additional property that for all $m\leq n$,
\begin{equation}
\tag{E$'$-ii}
\label{eq:strongercommuting}
G_n([m])\text{ commutes with }G_n(I)\text{ for all $I\in \binom{[n]}{m}$ with $I\cap [m]=\emptyset$}.
\end{equation}
Then for each $k\geq 2$ the group $G_n[k]$ is finitely generated for all  $n\geq (2k-1)d-1$.
\end{compactenum}
\end{theorem-prime}

\begin{remark} The hypotheses here can be relaxed. For example, if we assume instead that $G$ is generated in degree $d$ as an \FIncgroup{} and that $G_m$ is finitely generated for some $m>d$, the same proof will shows that $G_n[k]$ is finitely generated for all $n\geq \max(m,(2k-1)d)$. This indicates something about the structure of the proof\,---\,namely that we never need to do induction on $n$.

We could also replace the condition that $\gamma\cdot G_n(K)$ is conjugate to $G_n(K')$ with the much weaker assumption that $\gamma$ takes the image of $\HH_1(G_n(K);\R)$ in $\HH_1(G_n;\R)$ to the image of $\HH_1(G_n(K');\R)$. This weaker assumption leaves open the possibility that the subgroups $G_n(K)$ and $G_n(K')$ might not even be isomorphic, so in theory this allows much greater generality. However we are not aware of any examples that this would apply to.
\end{remark}

The proof of Theorem~\ref{maintheorem:stronger}  occupies the remainder of this section. 
The proof will go by induction (though it is less inductive than one might expect). Before beginning the induction, we establish some basic facts that will not depend on the inductive hypothesis.

If $G$ is an \FIncgroup{}, then for any $k$ the lower central series subgroups $G_I[k]\subset G_I$ satisfy $G_I^J(G_I[k])\subset G_J[k]$. Thus they define an \FIncgroup{} which we may denote $G[k]$. Note that for $I\subset J$, all three of $G_J(I)[k]$ and $G_J[k](I)$ and $G[k]_J(I)$ describe the same group (namely the image of $G_I[k]$ in $G_J$), so we need not worry about the distinction.

\begin{claim}
\label{claim:Gknormalgen}
For all $n\geq dk$, the group $G_n[k]$ is generated by the $G_n$-conjugates of $G_n[k](I)$ for $I\in \binom{[n]}{dk}$.
\end{claim}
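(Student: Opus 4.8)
The plan is to prove the claim by induction on $k$, the base case $k=1$ being nothing more than the hypothesis that $G$ is generated in degree $d$: for $n\geq d$ the group $G_n=G_n[1]$ is generated by the subgroups $G_n(I)$ with $I\in\binom{[n]}{d}$, so no conjugates are even required.

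For the inductive step, assume the claim for $k$ and fix $n\geq d(k+1)$ (so in particular $n\geq dk$ and the inductive hypothesis is available). Set $T=\bigcup_{I\in\binom{[n]}{dk}} G_n[k](I)$ and $S=\bigcup_{J\in\binom{[n]}{d}} G_n(J)$, viewed as subsets of $G_n$. By the inductive hypothesis the normal closure of $T$ in $G_n$ is $G_n[k]$, and by generation in degree $d$ we have $\langle S\rangle=G_n$. Now $G_n[k+1]=[G_n[k],G_n]$, and the first step is to observe, via the usual commutator calculus (the identities $[ab,c]={}^{a}[b,c]\,[a,c]$ and $[a,bc]=[a,b]\,{}^{b}[a,c]$ together with ${}^{g}[x,y]=[{}^{g}x,{}^{g}y]$), that $[G_n[k],G_n]$ is generated by the $G_n$-conjugates of the elements $[t,s]$ with $t\in T$ and $s\in S$: one first absorbs the conjugations expressing a general element of $G_n[k]$ as a product of $G_n$-conjugates of elements of $T$, and then expands a general element of $G_n$ as a word in $S$ (both $T$ and $S$ are unions of subgroups, so inverses cause no trouble).

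It remains to place each such $[t,s]$ inside some $G_n[k+1](L)$. Given $t\in G_n[k](I)$ with $\abs{I}=dk$ and $s\in G_n(J)$ with $\abs{J}=d$, we have $\abs{I\cup J}\leq dk+d=d(k+1)\leq n$, so there is some $L$ with $I\cup J\subseteq L\subseteq [n]$ and $\abs{L}=d(k+1)$. Functoriality of $G$ (and of the lower central series under the maps $G_L^n$, as noted just before the claim) shows that $t$ lies in the image of $G_L[k]$ and $s$ in the image of $G_L$ under $G_L^n$, hence $[t,s]$ lies in the image of $[G_L[k],G_L]=G_L[k+1]$, which is exactly $G_n[k+1](L)$, and this is contained in $G_n[k+1]$. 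Since $G_n[k+1]$ is normal in $G_n$ it contains every $G_n$-conjugate of every $G_n[k+1](L)$; conversely those conjugates contain all the generators $[t,s]$ of $G_n[k+1]$. This gives the claim for $k+1$.

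There is no serious obstacle here. The only substantive point is the containment $[G_n[k](I),G_n(J)]\subseteq G_n[k+1](I\cup J)$, and the only place the numerical hypothesis enters is in guaranteeing that a set $L\subseteq[n]$ of size exactly $d(k+1)$ containing $I\cup J$ exists, which is precisely the assumption that $n$ is at least $dk$ applied with $k+1$ in place of $k$; the commutator identities and the $\FInc$-functoriality bookkeeping are routine.
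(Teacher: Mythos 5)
Your proof is correct, and it is a genuine variant of the paper's argument rather than a paraphrase of it. The paper argues in one step: it invokes the standard fact (citing \cite[Eq.\ (2-8)]{ChurchPutmanGenJohnson}) that $G_n[k]$ is $G_n$-normally generated by the length-$k$ iterated commutators $[s_1,[s_2,\ldots,[s_{k-1},s_k]]]$ of a generating set $S=\bigcup_{I\in\binom{[n]}{d}}G_n(I)$, and then notes that such a commutator lands in $G_n(J')[k]$ for some $J'$ of size $dk$. You instead induct on $k$, using the elementary fact that if $N\normal G$ is the normal closure of $T$ and $G=\langle S\rangle$, then $[N,G]$ is the normal closure of $\{[t,s]:t\in T,\ s\in S\}$, together with the containment $[G_n[k](I),G_n(J)]\subseteq G_n[k+1](I\cup J)$. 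Your induction essentially unfolds the one-line citation in the paper: the general fact the paper quotes is proved exactly by the kind of induction you carry out, so your version is more self-contained at the cost of being longer. One small remark on exposition: your justification of the normal-generation step via ``absorbing conjugations and expanding words'' can be made cleaner by passing to the quotient $G_n/H$ where $H$ is the normal closure of $\{[t,s]\}$\,---\,there each $\bar t$ commutes with every generator $\bar s$, hence is central, hence $\bar N=\langle \bar T\rangle$ is central, giving $[N,G]\subseteq H$ directly without any commutator-identity bookkeeping.
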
\begin{proof}
Suppose that $n\geq dk$, and let $S=\bigcup_{I\in \binom{[n]}{d}} G_n(I)$. Our assumption that $G$ is generated in degree $d$ means precisely that $S$ generates $G_n$. For any group $G_n$ and any generating set $S$, the lower central series $G_n[k]$ is $G_n$-normally generated by the length-$k$ commutators $[s_1,[s_2,\ldots,[s_{k-1},s_k]]]$ with $s_i\in S$ (compare with~\cite[Eq.\ (2-8)]{ChurchPutmanGenJohnson}). Given such generators with $s_i\in G_n(I_i)$, let $J=\bigcup I_i$. Since $s_i\in G_n(I_i)\subset G_n(J)$, the commutator $[s_1,[s_2,\ldots,[s_{k-1},s_k]]]$ belongs to $G_n(J)[k]$. Since $\abs{J}\leq \sum_{i=1}^k \abs{I_i}= dk$, there is some $J'\in \binom{[n]}{dk}$ such that $[s_1,[s_2,\ldots,[s_{k-1},s_k]]]\in G_n(J')[k]$, verifying the claim.
\end{proof}

Let $L_I=\bigoplus_{k\geq 1}L_I[k]$ be the real Lie algebra associated to the lower central series of the group $G_I$, i.e.\ $L_I[k]=(G_I[k]/G_I[k+1])\otimes \R$ with bracket induced from the commutator. The \FIncgroup{} $G[k]$ descends to an \FIncgroup{} $L[k]$.
\begin{claim}
\label{claim:Lkgen}
The \FIncgroup{} $L[k]$ is generated in degree $dk$.
\end{claim}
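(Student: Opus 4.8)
The plan is to deduce this directly from Claim~\ref{claim:Gknormalgen} together with the elementary fact that conjugation acts trivially on the associated graded of the lower central series. First I would unwind what it means for the abelian-group-valued \FIncgroup{} $L[k]$ to be \emph{generated in degree $dk$}: by analogy with the definition for groups, this means that for every $n\geq dk$ the real vector space $L_n[k]=(G_n[k]/G_n[k+1])\otimes \R$ is spanned by the images of the maps $L_I[k]\to L_n[k]$ as $I$ ranges over $\binom{[n]}{dk}$. Here the map $L_I[k]\to L_n[k]$ is the one induced by $G_I^{[n]}$, which carries $G_I[k]$ into $G_n[k]$ and $G_I[k+1]$ into $G_n[k+1]$; its image is exactly the image of the subgroup $G_n[k](I)$ under the projection $G_n[k]\to G_n[k]/G_n[k+1]$, followed by $-\otimes \R$.

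Next I would invoke Claim~\ref{claim:Gknormalgen}: since $n\geq dk$, the group $G_n[k]$ is generated by the $G_n$-conjugates of the subgroups $G_n[k](I)$ for $I\in\binom{[n]}{dk}$. The only real content is then the observation that passing to $G_n[k]/G_n[k+1]$ kills the conjugation: for $g\in G_n$ and $x\in G_n[k]$ we have $gxg^{-1}x^{-1}=[g,x]\in[G_n,G_n[k]]=G_n[k+1]$, so $gxg^{-1}$ and $x$ have the same image in $G_n[k]/G_n[k+1]$, hence the same image in $L_n[k]$. Consequently the images in $L_n[k]$ of all the $G_n$-conjugates of a given $G_n[k](I)$ coincide with the image of $G_n[k](I)$ itself.

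Combining the two steps: the images of the subgroups $G_n[k](I)$ for $I\in\binom{[n]}{dk}$ already generate $G_n[k]/G_n[k+1]$ as an abelian group, so after tensoring with $\R$ their images span $L_n[k]$; and as noted above, the image of $G_n[k](I)$ in $L_n[k]$ is precisely the image of $L_I[k]\to L_n[k]$. Therefore $L_n[k]$ is spanned by the subspaces $L_n[k](I)$ for $I\in\binom{[n]}{dk}$ whenever $n\geq dk$, which is exactly the statement that $L[k]$ is generated in degree $dk$.

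I do not anticipate any real obstacle; the argument is a routine consequence of Claim~\ref{claim:Gknormalgen}. The only points requiring care are the bookkeeping identification of ``image of $G_I[k]$ in $G_n$, then project and tensor'' with ``image of $L_I[k]\to L_n[k]$'', and the remark that the hypothesis $n\geq dk$ is exactly the range in which Claim~\ref{claim:Gknormalgen} is available.
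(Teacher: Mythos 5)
Your argument is correct and is essentially identical to the paper's: both invoke Claim~\ref{claim:Gknormalgen} to get normal generation of $G_n[k]$ by the $G_n[k](I)$, observe that conjugation by $G_n$ becomes trivial in $G_n[k]/G_n[k+1]$, and conclude that the $L_n[k](I)$ span $L_n[k]$. The extra bookkeeping you spell out (identifying the image of $L_I[k]\to L_n[k]$ with the image of $G_n[k](I)$ after projecting and tensoring) is left implicit in the paper but matches it exactly.
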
 
\begin{proof} Suppose that $n\geq dk$. Claim~\ref{claim:Gknormalgen} states that $G_n[k]$ is generated by the $G_n$-conjugates of $G_n[k](J)$ for $J\in \binom{[n]}{dk}$. Since conjugation by $G_n$ becomes trivial in $G_n[k]/G_n[k+1]$, this implies that $G_n[k]/G_n[k+1]$ is \emph{generated} by the images of $G_n[k](J)$ for $J\in \binom{[n]}{dk}$. Thus the \FIncgroup{} $G[k]/G[k+1]$ is generated in degree $dk$, and thus so is $L[k]$.
\end{proof}

The outer action of $\Gamma_n$ on $G_n$ preserves the characteristic subgroup $G_n[k]$ and descends to an honest action of $\Gamma_n$ on $L_n[k]$.

\begin{claim}
\label{claim:tiltgamma}
For any $n\geq dk$ and any nonzero homomorphism $\lambda\in L_n[k]^\dual$, there exists $\gamma\in \Gamma_n$ such that $\gamma^*\lambda\in L_n[k]^\dual$ has the property that for every $I\in\binom{[n]}{dk}$ the restriction $\gamma^*\lambda|_{L_n[k](I)}$ is nonzero.
\end{claim}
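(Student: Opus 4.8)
The plan is to pit the ``bad locus'' of $\lambda$ against the Zariski density of $\Gamma_n$ in $\algp_n$. For $I\in\binom{[n]}{dk}$ set $W_I=\{\mu\in L_n[k]^\dual \mid \mu|_{L_n[k](I)}=0\}$, a linear subspace of $L_n[k]^\dual$; the conclusion we want is that $\gamma^*\lambda\notin\bigcup_I W_I$ for some $\gamma\in\Gamma_n$. The first step is to promote the hypotheses so that all of $\algp_n$ acts, not just $\Gamma_n$. Since the associated graded Lie algebra of a lower central series is generated in degree $1$, the iterated commutator gives a surjection $L_n[1]^{\otimes k}\onto L_n[k]$, which is $\Gamma_n$-equivariant (naturality of the commutator, together with the fact that inner automorphisms act trivially on each graded piece, so the outer $\Gamma_n$-action descends). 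Dualizing yields a $\Gamma_n$-equivariant inclusion $L_n[k]^\dual\into (L_n[1]^\dual)^{\otimes k}$. The target is an algebraic $\algp_n$-representation because $\HH_1(G_n;\R)=L_n[1]$ is one by hypothesis; and since $\Gamma_n$ is Zariski dense in $\algp_n$ (Borel density theorem), the $\Gamma_n$-invariant subspace $L_n[k]^\dual$ is automatically $\algp_n$-invariant. Hence $L_n[k]^\dual$ is an algebraic $\algp_n$-representation extending the $\Gamma_n$-action, and $\gamma\mapsto\gamma^*\lambda$ is the restriction of this action.

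Next I record the one consequence of the first hypothesis that I need. Because conjugation by $G_n$ is trivial on $L_n[k]$ and the lower central series is preserved by automorphisms, the statement ``$\gamma\cdot G_n(K)$ is conjugate to $G_n(K')$ in $G_n$'' implies $\gamma\cdot L_n[k](K)=L_n[k](K')$ whenever $\abs{K}=\abs{K'}$. Combining this with Claim~\ref{claim:Lkgen} (which applies since $n\geq dk$), it follows that for each fixed $I\in\binom{[n]}{dk}$ the $\Gamma_n$-invariant subspace of $L_n[k]$ generated by $L_n[k](I)$ is all of $L_n[k]$. In particular $L_n[k](I)\neq 0$ (we may assume $L_n[k]\neq 0$, else the claim is vacuous), so each $W_I$ is a proper subspace of $L_n[k]^\dual$.

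Now fix $I$ and consider $Z_I=\{\gamma\in\algp_n \mid \gamma^*\lambda\in W_I\}$. This is Zariski closed, being the preimage of the closed set $W_I$ under the morphism $\gamma\mapsto\gamma^*\lambda$, and it is a proper subset of $\algp_n$: if $\gamma^*\lambda\in W_I$ for all $\gamma\in\algp_n$, then the span $S$ of the $\algp_n$-orbit of $\lambda$ is an $\algp_n$-subrepresentation contained in $W_I$, so its pre-annihilator $S^\perp\subset L_n[k]$ is a $\Gamma_n$-invariant subspace containing $L_n[k](I)$; by the previous paragraph $S^\perp=L_n[k]$, forcing $S=0$ and $\lambda=0$, a contradiction. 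Since $\algp_n$ is connected, hence irreducible as a variety, the finite union $\bigcup_{I\in\binom{[n]}{dk}}Z_I$ is again a proper Zariski-closed subset, so its complement is a nonempty Zariski-open subset of $\algp_n$. The Borel density theorem gives that $\Gamma_n$ is Zariski dense in $\algp_n$, so $\Gamma_n$ meets this open set; any $\gamma$ in the intersection satisfies $\gamma^*\lambda\notin W_I$, i.e.\ $\gamma^*\lambda|_{L_n[k](I)}\neq 0$, for every $I\in\binom{[n]}{dk}$.

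The main obstacle is the first step: the hypotheses assert algebraicity only for $\HH_1(G_n;\R)=L_n[1]$, whereas $\lambda$ is a functional on $L_n[k]$, so one must manufacture an algebraic $\algp_n$-action on $L_n[k]^\dual$; the resolution is to realize it as a $\Gamma_n$-subrepresentation of a tensor power of $L_n[1]^\dual$ and invoke Borel density to transport invariance from $\Gamma_n$ to $\algp_n$. Once this is in place everything else is a routine genericity argument, but some care is needed to check that the $\Gamma_n$-action on $L_n[k]$ coming from the outer action on $G_n$ genuinely is the quotient of the action on $L_n[1]^{\otimes k}$, and that the standing hypotheses on $\algp_n$ and $\Gamma_n$ are exactly those of the Borel density theorem.
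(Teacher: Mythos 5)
Your proof is correct and follows essentially the same route as the paper's: extend the $\Gamma_n$-action on $L_n[k]$ to an algebraic $\algp_n$-action via the surjection $L_n[1]^{\otimes k}\onto L_n[k]$ and Borel density, then use irreducibility of the connected group $\algp_n$ together with the transitivity hypothesis on the $L_n[k](I)$ to conclude that the bad locus cannot swallow a dense orbit. The only organizational difference is cosmetic: the paper argues by contradiction, showing the orbit $\algp_n\cdot\lambda$ would land in a single annihilator $X_{I_0}$ and then transporting a good index $I_1$ to $I_0$, whereas you show directly that each $Z_I\subset\algp_n$ is a proper closed subset (via the pre-annihilator argument) and take the complement of the finite union; both hinge on the same irreducibility and density facts.
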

\begin{proof}
Note that the subspaces $L_n[k](I)$ for all $I\in \binom{[n]}{dk}$ together span $L_n[k]$ by Claim~\ref{claim:Lkgen}, so the restriction $\lambda|_{L_n[k](I_1)}$ of $\lambda$ itself must be nonzero for \emph{some} $I_1\in \binom{[n]}{dk}$. But this is not enough.

We begin by showing that the action of $\Gamma_n$ on $L_n[k]$ extends to an algebraic representation of $\algp_n(\R)$. The Lie algebra $L_n=\bigoplus L_n[k]$ is always generated as a Lie algebra in degree~$1$, i.e.\ by $L_n[1]=(G_n[1]/G_n[2])\otimes \R=\HH_1(G_n;\R)$. In other words, the natural map
\[L_n[1]^{\otimes k}=\HH_1(G_n;\R)^{\otimes k}\to L_n[k]\] is surjective. By assumption, the action of $\Gamma_n$ on $L_n[1]^{\otimes k}$ extends to an algebraic action of $\algp_n$, and we would like to show it descends to an action of $\algp_n$ on  $L_n[k]$.  The map $L_n[1]^{\otimes k}\to L_n[k]$ is equivariant for the action of $\Gamma_n$, so its kernel is $\Gamma_n$-invariant. Since $\Gamma_n$ is a lattice inside the real points $\algp_n(\R)$ of a connected semisimple algebraic group without compact factors, $\Gamma_n$ is Zariski-dense in $\algp_n$ by Borel density~\cite{BorelDensity}. Thus the kernel of $L_n[1]^{\otimes k}\to L_n[k]$ is in fact $\algp_n$-invariant, so the action of $\algp_n$ descends to $L_n[k]$ as desired.

Let $V=L_n[k]$ and $V_I=L_n[k](I)$.
For each $I\in \binom{[n]}{dk}$, the condition $\mu|_{V_I}=0$ defines a closed subvariety $X_I$ of $V^\dual=\{\mu\colon V\to \R\}$. The claim asserts that there exists $\gamma\in \Gamma_n$ such that $\gamma^*\lambda\notin X_I$ for all $I\in \binom{[n]}{dk}$. Suppose that this is not true. This means that for all $\gamma\in \Gamma_n$ we have $\gamma^*\lambda\in \bigcup_I X_I$. In other words the $\Gamma_n$-orbit $\Gamma_n\cdot \lambda$ is contained in $\bigcup_I X_I$. Since $\Gamma_n$ is Zariski-dense in $\algp_n$, this implies that $\algp_n\cdot \lambda\subset \bigcup_I X_I$. Since $\algp_n$ is connected, $\algp_n$ (and thus its image $\algp_n\cdot \lambda$) is irreducible, so it must be contained in one irreducible factor: there exists some $I_0\in \binom{[n]}{dk}$ such that $\algp_n\cdot \lambda \subset X_{I_0}$. In particular, for this particular $I_0$ every $\gamma\in \Gamma_n$ has $\gamma^*\lambda|_{V_{I_0}}=0$.

Recall from the beginning of the proof that $\lambda|_{V_{I_1}}$ is nonzero for some $I_1\in \binom{[n]}{dk}$.
But one of the hypotheses of the theorem was that if $\abs{I_0}=\abs{I_1}$ there exists some $g\in \Gamma_n$ such that $g\cdot G_n(I_1)$ is conjugate to $G_n(I_0)$. This implies that $g\cdot V_{I_1}=V_{I_0}$, so \[g^*\lambda|_{V_{I_0}}=0 \iff \lambda|_{V_{I_1}}=0\] This contradiction concludes the proof of the claim.
\end{proof}

\begin{claim}
\label{claim:handlecomplex}
Given $n$ and $m\geq 1$, let $Y_{n,m}$ be the graph whose vertices are pairs $(I\in \binom{[n]}{m},$ $\alpha\in G_n)$ where there is an edge between $(I,\alpha)$ and $(I',\beta)$ if the conjugate subgroups $G_n(I)^{\alpha}$ and $G_n(I')^{\beta}$ commute. Then $Y_{n,m}$ is connected for all $n\geq 2m+d$.
If $d\geq 2$ and $G$ satisfies the additional condition~\eqref{eq:strongercommuting}, then $Y_{n,m}$ is connected for all $n\geq 2m+d-1$.
\end{claim}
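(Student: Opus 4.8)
The plan is to show connectivity of $Y_{n,m}$ by exhibiting, for any two vertices $(I,\alpha)$ and $(I',\beta)$, a path between them. The key structural input is the ``commuting \FIncgroup{}'' hypothesis: for disjoint $I,J$ in a common set $K$, some $G_K$-conjugate of $G_K(I)$ commutes with $G_K(J)$. Pulling this back along inclusions, for disjoint $I, J \subset [n]$ there is $\delta \in G_n$ with $G_n(I)^\delta$ commuting with $G_n(J)$. So the first step is to reduce to connecting vertices of the form $(I, 1)$ and $(I', 1)$ (the trivial conjugate): given $(I,\alpha)$, I want to find $(J, 1)$ adjacent to it, which requires $J$ disjoint from $I$ and then massaging the conjugating element. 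Concretely, if $\abs{I} = m$ and $n \geq 2m$, pick $J \in \binom{[n]}{m}$ disjoint from $I$; then $G_n(I)^\alpha$ commutes with $G_n(J)^{\alpha\delta}$ for suitable $\delta$, giving an edge $(I,\alpha) \sim (J, \alpha\delta)$, and iterating/relabeling lets us walk the conjugator around. The cleanest route is: (a) $(I,\alpha)$ is connected to $(I, \alpha')$ whenever $\alpha^{-1}\alpha'$ lies in a subgroup that commutes with some $G_n(J)$, because both are adjacent to the same $(J, \gamma)$; (b) using the commuting hypothesis repeatedly, the conjugates $G_n(I)^\alpha$ for varying $\alpha$ all become connected to $(I,1)$ once $n$ is large enough to fit the relevant disjoint sets. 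I expect this bookkeeping — tracking exactly how large $n$ must be to accommodate all the disjoint sets simultaneously, and getting the bound down to $2m+d$ rather than something larger — to be the main obstacle.

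The second step handles the index sets themselves: I must connect $(I,1)$ to $(I',1)$ for arbitrary $I, I' \in \binom{[n]}{m}$. If $I \cap I' = \emptyset$ and $G_n(I)$ commutes with $G_n(I')$ on the nose (after a conjugation), they are adjacent; in general I would route through an auxiliary set. The natural move: find $I'' \in \binom{[n]}{m}$ disjoint from both $I$ and $I'$ — possible when $n \geq 3m$, but we want $n \geq 2m+d$, so instead I should only require $I''$ disjoint from $I$, connect $(I,1)$ to $(I'', \cdot)$, then connect $(I'', \cdot)$ toward $I'$, leveraging that we've already trivialized conjugators in step one. The role of the generation-in-degree-$d$ hypothesis is presumably to let an element of $G_n(I)$ be expressed via the $G_n(L)$ with $\abs{L}=d$, so that when $I$ and $I'$ overlap we can slide one index at a time, each slide costing only $d$ extra coordinates of ``room'' rather than $m$; this is where the $+d$ in the bound $2m+d$ comes from. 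So the inductive scheme is: change $I$ to $I'$ one element at a time, at each stage needing a fresh coordinate outside the current support, with the commuting relations supplied by disjointness of a size-$d$ piece from the size-$m$ set.

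For the improved bound $n \geq 2m+d-1$ under hypothesis~\eqref{eq:strongercommuting}, the point is that \eqref{eq:strongercommuting} gives an honest (unconjugated) commuting relation between $G_n([m])$ and $G_n(I)$ for $I$ disjoint from $[m]$, which removes one conjugation from the argument and hence saves one coordinate of room; combined with $d \geq 2$, which ensures the size-$d$ sliding pieces have enough slack, the same path construction goes through with $n$ one smaller. Concretely, wherever the general argument used ``some $G_n$-conjugate of $G_n(I)$ commutes with $G_n(J)$'' and thus needed to first move $I$ into a standard position, we can instead use the standard position $[m]$ directly and apply \eqref{eq:strongercommuting}, shaving the $+d$ reduction step down by one.

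The main obstacle, as noted, is the quantitative optimization: it is easy to prove $Y_{n,m}$ connected for $n$ large (say $n \geq 3m$ or so) by crude disjointness arguments, but extracting the sharp $2m+d$ (and $2m+d-1$) requires carefully interleaving the ``trivialize the conjugator'' and ``slide the index set'' steps so that they share coordinates rather than demanding disjoint blocks, and verifying at each atomic move that the relevant sets of sizes $m$, $m$, and $d$ fit inside $[n]$ with the needed disjointness pattern.
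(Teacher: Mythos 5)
Your proposal correctly identifies the ingredients (disjointness, the commuting hypothesis, the role of~\eqref{eq:strongercommuting}), but as you concede, you do not actually carry out the bookkeeping, and your account of where the bound $2m+d$ comes from is not right. The key organizational tool you are missing is the connectivity criterion of \cite[Lemma~2.1]{PutmanConnectivityNote}: since $G_n$ acts on $Y_{n,m}$ by $(I,\alpha)\mapsto(I,\alpha g)$, it suffices to check (i) that the basepoint $v=(I_0,1)$ can be joined to some vertex in each $G_n$-orbit, and (ii) that for a generating set $S=S^\pm$ of $G_n$, each $s\cdot v$ can be joined to $v$. This cleanly separates the two pieces of bookkeeping you are trying to interleave.

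With this criterion in hand, the accounting is not where you expect. Condition (i) concerns only the index sets $I\in\binom{[n]}{m}$ and needs only $n\geq 2m+1$: for disjoint $I,I'$ the commuting hypothesis makes $(I,1)$ adjacent to $(I',\alpha)$ for some $\alpha$, and for $n\geq 2m+1$ one can walk between any two $m$-sets through a chain of pairwise-disjoint $m$-sets. The degree-$d$ generation hypothesis does \emph{not} enter here, and is not used to ``slide one index at a time'' as you suggest. Rather, $d$ enters only in condition (ii): taking $S=\bigcup_{J\in\binom{[n]}{d}}G_n(J)$, for $s\in G_n(J)$ one observes that both $G_n(I_0)$ and $G_n(I_0)^s$ are contained in $G_n(J\cup I_0)$, a subgroup of support size $\leq m+d$. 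Choosing $I\in\binom{[n]}{m}$ disjoint from $J\cup I_0$ and a conjugate $G_n(I)^\alpha$ commuting with $G_n(J\cup I_0)$ yields the length-two path $(I_0,1)-(I,\alpha)-(I_0,s)$; this is where $n\geq 2m+d$ is needed. Your remark that $3m$ suffices crudely is true but misleading: the bottleneck is really $m+d$ versus $n-m$, not three disjoint $m$-blocks. For the $-1$ improvement, set $I_0=[m]$; if $J\cap I_0\neq\emptyset$ then $\abs{J\cup I_0}\leq m+d-1$ and the above works, while if $J\cap I_0=\emptyset$ then~\eqref{eq:strongercommuting} gives $G_n(I_0)^s=G_n(I_0)$ outright so only $n\geq 2m$ is needed. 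Your intuition that~\eqref{eq:strongercommuting} saves a conjugation is on the right track, but the actual saving is in the disjointness count $\abs{J\cup I_0}$, not in ``shaving the $+d$ reduction step.'' In short: the right decomposition is orbits-then-generators via Putman's lemma, the $d$ lives entirely in the generator step, and the path to the target is length two (out to a commuting witness and back), not a long slide.
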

\begin{proof}
Suppose either that $n\geq 2m+d$ or that $d\geq 2$ and $n\geq 2m+d-1$; note that in either case we have $n\geq 2m+1$ and $n\geq d$.

We remark that a great many of the conjugates $G_n(I)^\alpha$ will of course coincide, but we do not need to keep track of this.
$G_n$ acts on $Y_{n,m}$ by $(I,\alpha)\mapsto (I,\alpha g)$. 

Fix one basepoint $v=(I_0,1)$. We will apply the condition of~\cite{PutmanConnectivityNote} to prove that $Y_{n,m}$ is connected. According to \cite[Lemma~2.1]{PutmanConnectivityNote} it suffices to show that
\begin{compactenum}[label=(\roman*)]
\item\label{part:orbitconnect} for every $G_n$-orbit of vertices, there is a path from $v$ to some vertex in that orbit, and
\item\label{part:generatorconnect} there is a generating set $S=S^{\pm}$ for $G_n$ such that for every $s\in S$, there is a path from $v$ to $s\cdot v$.
\end{compactenum}
To prove \ref{part:orbitconnect}, consider the relation on $\binom{[n]}{m}$ defined by $I\sim I'$ if there is a path in $Y_{n,m}$ from $(I,1)$ to $(I',\alpha)$ for some $\alpha\in G_n$. Note that this relation is automatically an equivalence relation, and \ref{part:orbitconnect} asserts precisely that $I_0\sim I$ for all $I$. If $I,I'\in \binom{[n]}{m}$ have $I\cap I'=\emptyset$, then the definition of commuting \FIncgroup{} states that there is a $G_n$-conjugate of $G_n(I')$ which commutes with $G_n(I)$, or in other words that $I\sim I'$ whenever $I\cap I'=\emptyset$. Since $n\geq 2m+1$, between any two $m$-element subsets $I_0$ and $I$ there exists a sequence $I_0,I_1,\ldots,I_\ell=I$ where $I_i\in \binom{[n]}{m}$ and $I_i\cap I_{i+1}=\emptyset$. This verifies condition \ref{part:orbitconnect}.

For \ref{part:generatorconnect}, as in the proof of Claim~\ref{claim:Gknormalgen} we choose the generating set $S=\bigcup_{J\in \binom{[n]}{d}} G_n(J)$. Since $n\geq d$, we know that $S$ generates $G_n$. Consider some generator $s\in S$ belonging to $G_n(J)$ for some $J\in\binom{[n]}{d}$. We consider various cases, but in each case we will find some $(I,\alpha)$ such that $G_n(I)^\alpha$ commutes with both $G_n(I_0)$ and $G_n(I_0)^s$. This implies that there is a length-$2$ path in $Y_{n,m}$ from $(I_0,1)$ to $(I,\alpha)$ to $(I_0,s)$, as required for \ref{part:generatorconnect}.

We first handle the general case when $n\geq 2m+d$. Note that $G_n(I_0)\subset G_n(J\cup I_0)$ and $s\in G_n(J)\subset G_n(J\cup I_0)$. Therefore the conjugate $G_n(I_0)^s$ by $s$ is contained in $G_n(J\cup I_0)$. Since $\abs{J\cup I_0}\leq \abs{J}+\abs{I_0}=d+m$ and $n\geq 2m+d$, we can choose some $I\in \binom{[n]}{m}$ with $(J\cup I_0)\cap I=\emptyset$. By the definition of commuting \FIncgroup{}, there exists some $G_n$-conjugate $G_n(I)^\alpha$ of $G_n(I)$ commuting with $G_n(J\cup I_0)$. Since $G_n(I_0)\subset G_n(J\cup I_0)$, we know $G_n(I_0)$ commutes with $G_n(I)^\alpha$, and similarly $G_n(I_0)^s\subset G_n(J\cup I_0)$ implies $G_n(I_0)$ commutes with $G_n(I)^\alpha$. This provides the desired $(I,\alpha)$.

Now assume that $n\geq 2m+d-1$ and that $G$ satisfies~\eqref{eq:strongercommuting}. In this case we must choose $I_0=[d]$. We separate out two cases depending on whether $I_0$ and $J$ are disjoint. If $I_0\cap J\neq \emptyset$, then $\abs{I_0\cup J}\leq \abs{I_0}+\abs{J}-1=d+m-1$. Since $n\geq 2m+d-1$, as above we can find some $I\in \binom{[n]}{m}$ with $(J\cup I_0)\cap I=\emptyset$, so we find $(I,\alpha)$ as before.

The last case is when $I_0\cap J=\emptyset$. The condition~\eqref{eq:strongercommuting} then guarantees that $s\in G_n(J)$ commutes with $G_n(I_0)$, so $G_n(I_0)^s=G_n(I_0)$. Note that this does \emph{not} mean that there is an edge directly from $(I_0,1)$ to $(I_0,s)$! (In general $G_n(I_0)$ is nonabelian, so it does not commute with itself.) But since $n\geq 2m+1\geq 2m$ we can choose some $I\in \binom{[n]}{m}$ with $I\cap I_0=\emptyset$ and some conjugate $G_n(I)^\alpha$ commuting with $G_n(I_0)$, providing the desired $(I,\alpha)$.
\end{proof}

We now begin the induction with the base case $k=1$. Here there is almost nothing to prove. If $\abs{I}=\abs{I'}$ then $G_n(I)$ is isomorphic to $G_n(I')$, since an automorphism representing some $\gamma\in \Gamma_n$ takes one to a conjugate of the other. In particular, for each $I\in \binom{[n]}{d}$ the subgroup $G_n(I)$ is isomorphic to $G_n([d])$ which is a quotient of $G_d$. Since $G_d$ is finitely generated by assumption, so is $G_n(I)$. We have assumed that $G$ is generated in degree $d$, so for any $n\geq d$ the group $G_{n}$ is generated by the $\binom{n}{d}$ subgroups $G_{n}(I)$ for $I\in \binom{[n]}{d}$. Therefore $G_{n}=G_n[1]$ is finitely generated for all $n\geq d$.

Now fix for the rest of the proof some $n\geq (2(k+1)-1)d=(2k+1)d$ (or under the hypotheses of \ref{part:improverangebyone}, that $n\geq (2k+1)d-1$). We may assume by induction that $G_n[k]$ is finitely generated, and our goal is to prove that $G_n[k+1]$ is finitely generated.

To do this, we must prove that $S(G_n[k]/G_n[k+1])\subset \Sigma(G_n[k])$; in other words, for every nonzero homomorphism $\lambda\colon G_n[k]\to G_n[k]/G_n[k+1]\to \R$, we must show that $[\lambda]\in \Sigma(G_n[k])$.

Every such homomorphism factors uniquely as $G_n[k]\to L_n[k]\to \R$ for a unique map $L_n[k]\to \R$, which we denote by $\lambda$ as well. 
By Claim~\ref{claim:tiltgamma}, there exists some $\gamma\in \Gamma_n$ such that $\lambda'=\gamma^*\lambda$ has the property that $\lambda'|_{L_n[k](I)}\neq 0$ for all $I\in \binom{[n]}{dk}$. If $\widetilde{\gamma}$ is an automorphism of $G_n$ representing $\gamma\in \Gamma_n$, we have $\widetilde{\gamma}^*[\lambda]=[\lambda']\in S(G_n[k])$. Since $\Sigma(G_n[k])$ is invariant under automorphisms, it suffices to prove that $[\lambda']$ belongs to $\Sigma(G_n[k])$.

Therefore by possibly replacing $\lambda$ by $\lambda'$, we may assume that $\lambda$ has $\lambda|_{L_n[k](I)}\neq 0$ for all $I\in \binom{[n]}{dk}$. Since $L_n[k](I)$ is the image of $G_n[k](I)$ in $L_n[k]$, this means that for each $I$ there exists some $g_I\in G_n[k](I)$ with $\lambda(g_I)\neq 0$. Choose once and for all such an element $g_I\in G_n[k](I)$ for each $I \in \binom{[n]}{dk}$, and let $A=\{g_I^\alpha=\alpha^{-1}g_I\alpha\,|\,\alpha\in G_n,I \in \binom{[n]}{dk}\}$ be the set of all their $G_n$-conjugates. Since $\lambda$ factors through $G_n[k]/G_n[k+1]$, we have $\lambda(g_I^{\alpha})=\lambda(g_I)\neq 0$. Therefore $A$ survives under $\lambda$ in the sense of Proposition~\ref{prop:KMM}.

There is a natural map from the graph $Y_{n,dk}$ to the commuting graph $C(A)$ taking $(I,\alpha)\in Y_{n,dk}$ to $g_I^\alpha\in A$, which is surjective on vertices by definition. Every edge  in $Y_{n,dk}$ is taken to an edge in $C(A)$, since $g_I^\alpha$ is contained in the subgroup $G_n(I)^{\alpha}$ (so if $G_n(I)^\alpha$ and $G_n(I')^\beta$ commute then certainly $g_I^\alpha$ and $g_{I'}^\beta$ commute). Claim~\ref{claim:handlecomplex} states  under our assumptions on $n$ that $Y_{n,dk}$ is connected, so $C(A)$ is connected as well.

Since $n\geq (2k+1)d-1\geq dk$, Claim~\ref{claim:Gknormalgen} states that $G_n[k]$ is generated by the $G_n$-conjugates of the subgroups $G_n[k](I)$ for $I\in \binom{[n]}{dk}$. We let $B$ be this generating set $B=\bigcup_{I,\beta}G_n[k](I)^{\beta}$. It remains only to check that $A$ dominates $B$, so consider an arbitrary element $b\in B$ belonging to $G_n[k](I)^\beta$. Choose some vertex $(I',\alpha)$ of $Y_{n,dk}$ adjacent to the vertex $(I,\beta)$. Every element of $G_n[k](I)^\beta$ commutes with every element of $G_n[k](I')^\alpha$, so certainly $g$ commutes with $g_{I'}^\alpha\in A$. Thus every element of $B$ commutes with some element of $A$, so $A$ dominates $B$. This verifies the hypotheses of Proposition~\ref{prop:KMM}, so applying that proposition shows that $[\lambda]\in \Sigma(G_n[k])$. As discussed above, we conclude that $S(G_n[k]/G_n[k+1])$ is contained in $\Sigma(G_n[k])$. Therefore $G_n[k]$ is finitely generated. This concludes the proof of Theorem~\ref{maintheorem:stronger}.\qed

\section{\texorpdfstring{Deducing Theorems \ref{maintheorem:torellilcs} and \ref{maintheorem:ialcs} from Theorem~\ref{maintheorem:stronger}}{Deducing Theorems A and C from Theorem~E'}}
\label{section:examples}

In this section, we will show how to derive Theorems \ref{maintheorem:torellilcs} and \ref{maintheorem:ialcs} from
Theorem \ref{maintheorem:stronger}.  We begin in \S \ref{section:reductions} with some reductions.  The main body
of this derivation is in \S \ref{section:verify}.

\subsection{Reductions}
\label{section:reductions}

In this section we show that Theorem~\ref{maintheorem:torellilcs} it suffices to prove that $\Torelli_g^1[k]$ is finitely generated for $g \geq 6k-4$, and similarly for Theorem~\ref{maintheorem:ialcs}. We do this using the following two reductions.
\begin{compactitem}
\item If $\subgp$ is a group satisfying $\Torelli_g^b[k] \subset \subgp \subset \Torelli_g^b$, then letting
$\overline{\subgp}$ be the image of $\subgp$ in $\Torelli_g^b / \Torelli_g^b[k]$ we have a short
exact sequence
\[1 \longrightarrow \Torelli_g^b[k] \longrightarrow \subgp \longrightarrow \overline{\subgp} \longrightarrow 1.\]
Since $\Torelli_g^b / \Torelli_g^b[k]$ is a finitely generated nilpotent group, its subgroup
$\overline{\subgp}$ is also finitely generated.  To prove that $\subgp$ is finitely generated when $g \geq 6k-4$, it
is thus enough to prove that $\Torelli_g^b[k]$ is finitely generated for $g \geq 6k-4$.
\item The homomorphism $\Torelli_g^1 \rightarrow \Torelli_g$ obtained by gluing a disc
to $\partial \Sigma_g^1$ is surjective, and thus the restriction of this homomorphism
to $\Torelli_g^1[k]$ is also surjective.  To prove that $\Torelli_g[k]$ is finitely
generated for $g \geq 6k-4$, it is thus enough to prove that $\Torelli_g^1[k]$ is finitely generated for $g \geq 6k-4$.
\end{compactitem}
Using the first reduction, we similarly see that to prove Theorem \ref{maintheorem:ialcs} it is enough
to prove that $\IA_n[k]$ is finitely generated for $n \geq 6k-4$.

\subsection{\texorpdfstring{Applying Theorem~\ref{maintheorem:stronger} to $\Torelli$ and $\IA$}{Applying Theorem~E' to Torelli and IA}}
\label{section:verify}
It remains to show that $\Torelli_g^1[k]$ is finitely generated for $g \geq 6k-4$ and that
$\IA_n[k]$ is finitely generated for $n \geq 6k-4$.  We will deduce both of these from Theorem \ref{maintheorem:stronger}, so we must verify that its hypotheses are satisfied.

\para{Automorphism group of free group}
We showed in~\cite[Lemma~3.1]{ChurchPutmanGenJohnson} that there is an FI-group $G=\IA$ with $G_n=\IA_n$. This immediately implies that $\IA$ is an $\FInc$-group; indeed, restricting part (ii) of \cite[Definition~2.1]{ChurchPutmanGenJohnson} to inclusions yields precisely the definition of an $\FInc$-group. We will verify that the $\FInc$-group $\IA$ satisfies the hypotheses of Theorem~\ref{maintheorem:stronger} with $d=3$.

We can describe $\IA$ concretely. For any subset $I\subset \N$ let $F_I$ denote the free group on the set $\{x_i\,|\,i\in I\}$. Then $\IA_I$ is the subgroup of $\Aut(F_I)$ acting trivially on the abelianization. Whenever $I\subset K$ we obtain an injection $\IA_I\to \IA_K$ sending $\varphi\in \IA_I$ to the automorphism sending $x_i\mapsto\varphi(x_i)$ for $i\in I$ and sending $x_k\mapsto x_k$ for $k\notin I$. Since the maps $\IA_I\to \IA_K$ are injective we can canonically identify $\IA_I$ with $\IA_K(I)$.

From this description we see that the subgroups $\IA_I$ and $\IA_J$ of $\IA_K$ commute whenever $I\cap J=\emptyset$ (no $\IA_K$-conjugation is necessary). So $\IA$ is a commuting $\FInc$-group, and satisfies the condition~\eqref{eq:strongercommuting}.

Magnus~\cite{MagnusGenerators} proved that $\IA_n$ is finitely generated, and we deduced from Magnus's generating set in~\cite[Lemma~2.13 and Proposition 3.3]{ChurchPutmanGenJohnson} that $\IA$ is generated in degree $d=3$.

The extension $1\to \IA_n\to \Aut(F_n)\to \GL_n\Z\to 1$ gives an outer action of $\GL_n\Z$ on $\IA_n$. We take $\Gamma_n$ to be the subgroup $\Gamma_n=\SL_n\Z$. It was independently shown by Cohen--Pakianathan~\cite{CohenPakianathanH1IA}, Farb~\cite{FarbH1IA}, and Kawazumi~\cite{KawazumiH1IA} that $\HH_1(\IA_n;\R)\iso \Hom(H,\bwedge^2 H)$ where $H\iso \R^n$ is the standard representation of $\SL_n\Z$, so this does indeed extend to an action of $\algp_n=\SL_n$.

The subgroup of $\Aut(F_n)$ permuting the generators and their inverses is the signed permutation group $S_n^\pm$. If $\widetilde{\sigma}\in S_n^\pm$ projects to $\sigma\in S_n$, the explicit description of $\IA(F_K)$ above for $K\subset [n]$ shows that $\widetilde{\sigma}\in \Aut(F_n)$ conjugates $\IA(F_K)$ to $\IA(F_{\sigma(K)})$. Since the index-2 subgroup of $S_n^\pm$ projecting to $\SL_n\Z$ surjects onto $S_n$, their images in $\Gamma_n=\SL_n\Z$ provide the necessary elements $\gamma\in \Gamma_n$.

This verifies that the $\FInc$-group $G=\IA$ satisfies the hypotheses of Theorem~\ref{maintheorem:stronger} with $d=3$. We conclude from the structure theorem that the group $G_n[k]=\IA_n[k]$ is finitely generated for all $n\geq (2k-1)d-1=6k-4$.

\para{Mapping class group}
We defined in~\cite{ChurchPutmanGenJohnson} (in the paragraphs following Lemma 4.2) a weak FI-group $G=\Torelli$ with $G_n=\Torelli_n^1$. This immediately implies that $\Torelli$ is an $\FInc$-group; indeed, part (iii) of \cite[Definition~2.4]{ChurchPutmanGenJohnson} is precisely the definition of an $\FInc$-group.

We can be more concrete about the $\FInc$-group $\Torelli$. In \cite[\S4.1]{ChurchPutmanGenJohnson} we gave an infinite-genus surface $\Sigma_\N$, and for every $I\subset \N$ a surface $\Sigma_I\subset \Sigma_\N$ homeomorphic to $\Sigma_{\abs{I}}^1$, such that $I\subset J$ implies $\Sigma_I\subset \Sigma_J$. We can then define $\Torelli_I$ to be the subgroup $\Torelli(\Sigma_I)$ consisting of mapping classes supported on $\Sigma_I$ and acting trivially on homology. Since $\Sigma_I\subset \Sigma_K$ when $I\subset K$, we can identify $\Torelli_I$ with $\Torelli_K(I)$.

There is an important but subtle point here: for \emph{any} family of nested subsurfaces $\Sigma_I$, the description above would define a perfectly good $\FInc$-group $\Torelli$. However, it is only by carefully choosing the surfaces $\Sigma_I$ that we can guarantee that $\Torelli$ will be generated in degree 3. See \cite[Figure 2]{ChurchPutmanGenJohnson} for an illustration of the subsurfaces that we choose. In \cite[Proposition~4.5]{ChurchPutmanGenJohnson} we improved on Putman's cubic generating set from~\cite{PutmanSmallGenset} to show that with this definition, $\Torelli$ is indeed generated in degree 3. (Note incidentally that since we are not working with weak FI-groups anymore, the remark preceding \cite[Lemma 4.6]{ChurchPutmanGenJohnson} is no longer necessary.) Of course, Johnson~\cite{JohnsonFinite} proved that $\Torelli_3^1$ is finitely generated.

Given $I,J\subset [n]$, we can see from \cite[Figure 2]{ChurchPutmanGenJohnson} that $\Sigma_I$ and $\Sigma_J$ need not be disjoint even if $I\cap J=\emptyset$, so $\Torelli_I$ and $\Torelli_J$ need not commute.
However, we can always find a subsurface $\Sigma'_J$ homeomorphic to $\Sigma_J$ that is disjoint from $\Sigma_I$ and with $\HH_1(\Sigma'_J;\R)=\HH_1(\Sigma_J;\R)$ as subspaces of $\HH_1(\Sigma_{[n]};\R)$. By \cite[Lemma 4.1(ii)]{ChurchPutmanGenJohnson}, this implies that the subgroup $\Torelli_J=\Torelli(\Sigma_J)$ is $\Torelli_{[n]}$-conjugate  to $\Torelli(\Sigma'_J)$. Since the latter commutes with $\Torelli_I$, this verifies that $\Torelli$ is a commuting $\FInc$-group. Moreover, from the description in \cite[\S4.1 and Figure 2]{ChurchPutmanGenJohnson} one sees that $\Sigma_{[m]}$ \emph{is} disjoint from $\Sigma_K$ whenever $K\cap [m]=\emptyset$, so $\Torelli$ satisfies the condition~\eqref{eq:strongercommuting}. 

The extension $1\to \Torelli_n^1\to \Mod_n^1\to \Sp_{2n}\Z\to 1$ gives an outer action of $\Gamma_n=\Sp_{2n}\Z$ on $\Torelli_n^1$. Johnson's computation of $\HH_1(\Torelli_n^1;\Z)$ in~\cite{JohnsonAbel} implies that $\HH_1(\Torelli_n^1;\R)\iso \bwedge^3 H$, where $H\iso \R^{2n}$ is the standard representation of $\Sp_{2n}\Z$, so this does indeed extend to an action of $\algp_n=\Sp_{2n}$.

There is a natural subgroup of $\Sp_{2n}\Z$ isomorphic to $S_n$ which permutes the homology classes $\{a_1,\ldots,a_n\}$ and $\{b_1,\ldots,b_n\}$ (in the notation of \cite[Lemma 4.2]{ChurchPutmanGenJohnson}) separately. The action of $\sigma\in S_n$ on $\HH_1(\Sigma_{[n]})$ sends the subspace $\HH_1(\Sigma_K)$ to $\HH_1(\Sigma_{\sigma(K)})$. According to \cite[Lemma 4.1(ii)]{ChurchPutmanGenJohnson} this implies that this outer automorphism takes $\Torelli_K=\Torelli(\Sigma_K)$ to a $\Torelli_{[n]}$-conjugate of $\Torelli_{K'}=\Torelli(\Sigma_{K'})$. Therefore this subgroup of $\Gamma_n=\Sp_{2n}\Z$ provides the necessary elements $\gamma\in \Gamma_n$.

This verifies that the $\FInc$-group $G=\Torelli$ satisfies the hypotheses of Theorem~\ref{maintheorem:stronger} with $d=3$. We conclude from the structure theorem that the group $G_n[k]=\Torelli_n[k]$ is finitely generated for all $n\geq (2k-1)d-1=6k-4$.

\noindent
\begin{tabular*}{\linewidth}[t]{@{}p{\widthof{E-mail: {\tt tfchurchmathstanford.edu}}+0.75in}@{}p{\linewidth - \widthof{E-mail: {\tt tfchurchmathstanford.edu}} - 0.75in}@{}}
{\raggedright
Thomas Church\\
Department of Mathematics\\
Stanford University\\
450 Serra Mall\\
Stanford, CA 94305\\
E-mail: \myemail{tfchurch@stanford.edu}}
&
{\raggedright
Andrew Putman\\
Department of Mathematics\\
University of Notre Dame\\
279 Hurley Hall\\
Notre Dame, IN 46556\\
E-mail: \myemail{andyp@nd.edu}}
\end{tabular*}
\end{document}